\newcommand{\dint}{\displaystyle\int}
\newcommand\redout{\bgroup\markoverwith
	{\textcolor{red}{\rule[0.5ex]{2pt}{0.8pt}}}\ULon}
\theoremstyle{plain}
\newtheorem{theorem}{Theorem}[section]
\newtheorem{hy}{Assumption}[section]
\newtheorem{corollary}[theorem]{Corollary}
\newtheorem{lemma}[theorem]{Lemma}
\newtheorem{proposition}[theorem]{Proposition}
\theoremstyle{definition}
\theoremstyle{remark}
\newtheorem{remark}[theorem]{Remark}
\numberwithin{equation}{section}
\numberwithin{theorem}{section}
\begin{document}
	\renewcommand{\thefootnote}{\fnsymbol{footnote}}
	
	\begin{center}
		{\Large \textbf{McKean-Vlasov Processes of Bridge Type}} \\[0pt]
		~\\[0pt] \textbf{Wolfgang Bock} \footnote[1]{Linnaeus University, Vejdesplats 7, SE-351 95 V\"axj\"o, Sweden. E-mail: \texttt{wolfgang.bock@lnu.se}} \textbf{Astrid Hilbert} \footnote[2]{Linnaeus University, Vejdesplats 7, SE-351 95 V\"axj\"o, Sweden.
			E-mail: \texttt{astrid.hilbert@lnu.se}} and \textbf{Mohammed Louriki} \footnote[3]{Mathematics Department, Faculty of Sciences Semalalia, Cadi Ayyad University, Boulevard Prince Moulay Abdellah, P. O. Box 2390, Marrakesh 40000, Morocco. E-mail: \texttt{m.louriki@uca.ac.ma}}
		\\[0pt]
	\end{center}
	\begin{abstract}
		In this paper, we introduce and study McKean–Vlasov processes of bridge type. Specifically, we examine a stochastic differential equation (SDE) of the form: 
		\begin{equation*}
			\mathrm{d} \xi_t=-\mu(t,\mathbb{E}[\varphi_1(\xi_t)]) \frac{\xi_t}{T-t} \mathrm{d} t+\sigma(t,\mathbb{E}[\varphi_2(\xi_t)]) \mathrm{d} W_t,\,\, t<T,
		\end{equation*} 
		where $\mu$ and $\sigma$ are deterministic functions that depend on time $t$ and the expectation of given functions $\varphi_1$ and $\varphi_2$ of the process, and $W$ is a Brownian motion. We establish the existence and uniqueness of solutions to this equation and analyze the behavior of the process as $t$ approaches $T$. Furthermore, we provide conditions ensuring the pinned property of the process $\xi$. Finally, we explore explicit solutions in specific cases of interest, including power-weighted expectations and second moments in the drift.  
	\end{abstract}
	\smallskip
	\noindent 
	\textbf{Keywords:} McKean-Vlasov Processes, Brownian bridges, Gaussian processes.\\
	\\ 
	\\
	\textbf{MSC 2020:} 60G15, 60G40, 60G44, 60G51, 60J25.
	\section{Introduction}
	
	\hspace{0,6cm} Mean-field stochastic differential equations also McKean–Vlasov stochastic differential equations (MV-SDEs) describe processes, the dynamics of which depend not only on the state of the process but also its distribution. Historically MV-SDEs depended on the mean of the state, which is also the setting in this work. These equations capture interactions between individual and collective behavior, making them valuable tools in various fields, including mean-field theory, interacting particle systems, and financial modeling. Here we refer to the classical papers which devoted to the study of McKean–Vlasov diffusion processes \cite{F}, \cite{Mc1}, \cite{Mc2}, \cite{S}, \cite{Sz}, and, e.g., the more recent papers \cite{BR}, \cite{BR2}, \cite{DST}, \cite{HSS}, \cite{HR}, \cite{HRW}, \cite{HW}, \cite{Gru24} \cite{MV} and \cite{W}. 
	
	The Brownian bridge is a fundamental concept in statistics and probability theory, widely recognized as a powerful tool with diverse applications. For instance, it emerges as the large-population limit of the cumulative sum process when sampling randomly without replacement from a finite population (see \cite{R}). Additionally, it arises in the limit of the normalized difference between a given distribution and its empirical law and is central to the Kolmogorov-Smirnov test. The Brownian bridge also has numerous applications in finance (see, e.g., \cite{B}, \cite{BBE}, \cite{BS}, \cite{K}, \cite{L1}, and \cite{L2}). Since the standard Brownian bridge vanishes at its terminal time $T$, it is considered a suitable candidate for modeling the noise process that represents the flow of information about a non-defaultable cash flow due at time $T$ in the information-based approach introduced by Brody et al. in \cite{BHM2007}. In \cite{M}, the Brownian bridge is generalized by modifying its drift term in the associated SDE, where the drift is scaled by a constant $\alpha$. Specifically, the author studied the process \[ \mathrm{d} Z_t = -\alpha \frac{Z_t}{T-t} \mathrm{d} t + \mathrm{d} B_t, \] where $\alpha \in \mathbb{R}_*^{+}$ and $B$ is a standard Brownian motion. This process is referred to as the $\alpha$-Brownian bridge of length $T$. The study in \cite{M} provides an explicit representation of the $\alpha$-Brownian bridge and investigates its pinned property. Furthermore, it is shown that if $\alpha \neq 1$, the $\alpha$-Brownian bridge of length $T$ is not the bridge of length $T$ for a centered Gaussian Markov process. As an application, the process is used to determine the laws of certain quadratic functionals of Brownian motion. In \cite{BK}, \cite{HR}, and \cite{JY}, the notion of the $\alpha$-Brownian bridge of length $T$ is generalized to continuous functions $\alpha$ defined on the interval $[0,T)$.
	
	Motivated by the previous works, we introduce and study McKean–Vlasov SDEs of
	bridge type. This type of processes has not been considered before. This
	paper explores different types of McKean–Vlasov SDEs, focusing on the
	interplay between power-weighted moments of the solution, transformations of
	the state variable, and time-dependent coefficients in the drift and
	diffusion terms. We first consider McKean–Vlasov SDEs with additive noise
	where the drift incorporates a power-weighted moments of the process itself.
	Specifically, we study equations of the form
	\begin{equation}
		\mathrm{d}X_t = -(\mathbb{E}[X_t^\nu])^\alpha \dfrac{X_t}{T-t} \, \mathrm{d}t
		+ \mathrm{d}W_t, \quad t < T,
		\label{eqintroSDE-MVexp}
	\end{equation}
	with $X_0=x\geq 0$. Here, $W$ represents a Brownian motion, and the parameters
	$\alpha,\nu > 0$ govern the impact of the mean behaviour. In the special case
	$\alpha = 0$, the equation simplifies to the classical SDE for the standard
	Brownian bridge of length $T$.
	For $\alpha,\nu > 0$, however, the drift reflects the mean-field effect. We provide explicit solutions to the SDEv\eqref{eqintroSDE-MVexp} for $\nu=1,\ \alpha > 0$ and $\nu=2,\ \alpha =1> 0$. We furthermore prove that, for $\nu=1,\ \alpha > 0$, these are not the bridge
	of a Gaussian Markov process. Nevertheless, they retain the pinned property.

	While powers of the first moment or mean reflect the trend of the solution
	$X_t$, powers of the second moment capture the squared magnitude. When
	modelling it provides richer information about the spread around the mean,
	variance or intensity of the process and has the interpretation of energy in
	physics and volatility in finance.
	
	We establish existence and uniqueness of a strong solution to the MV-SDE
	\eqref{eqintroSDE-MVexp} and prove that the solution possesses the pinned
	property. In the specific cases where $\nu=1, \ \alpha>0$, or $\alpha=1, \
	\nu =2$ we further show that the MV-SDE \eqref{eqintroSDE-MVexp} admits an
	explicit solution, in particular for $\alpha=1, \ \nu =2, \ x=0$, it is given by:
	\begin{equation*}
		Y_t=\dfrac{\dint_0^t\left(I_1(2 \sqrt{2} \sqrt{T}) K_0(2 \sqrt{2} \sqrt{T - s})
			+ K_1(2 \sqrt{2} \sqrt{T}) I_0(2 \sqrt{2} \sqrt{T - s})\right)^{\frac{1}{2}}\,\mathrm{d}W_s}{\left(I_1(2 \sqrt{2} \sqrt{T}) K_0(2 \sqrt{2} \sqrt{T - t}) + K_1(2 \sqrt{2} \sqrt{T}) I_0(2 \sqrt{2} \sqrt{T - t})\right)^{\frac{1}{2}}}, \quad t <T,
	\end{equation*}
	where,
	$I_n$ is the modified Bessel function of the first kind and $K_n$ is the
	modified Bessel function of the second kind. 
	
	Despite of the relevance of feedback-type drifts in terms of power-weighted
	expectations many real-world systems require dynamics that respond to other
	statistical properties, such as skewness, or non-linear transformations
	\(\varphi_1\) as drift and \(\varphi_2\) as diffusion coefficient.
	Furthermore, the inclusion of explicit time dependence resulting in a drift
	\(\mu(t, \cdot)\) and diffusion coefficent \(\sigma(t, \cdot)\) allows the
	model to capture evolving dynamics over a finite time horizon \([0, T)\),
	which is essential in applications and modelling in financial mathematics,
	physics or biology.
	
	We conclude this work by considering MV-SDEs with non-linear feed-back drift
	of the form:
	\begin{equation}
		\mathrm{d} \xi_t=-\mu(t,\mathbb{E}[\varphi_1(\xi_t)]) \frac{\xi_t}{T-t} \mathrm{d} t
		+\sigma(t,\mathbb{E}[\varphi_2(\xi_t)]) \mathrm{d} W_t,\,\, t<T,
		\label{eqintroMV-SDEgeneral}
	\end{equation}
	with $\xi_0=0$.
	
	We establish existence and uniqueness of solutions to the above MV-SDE under
	appropriate conditions on the deterministic functions $\mu$, $\sigma$,
	$\varphi_1$ and $\varphi_2$. Additionally, we prove that the second moment of
	the process satisfies:
	\begin{equation*}
		\mathbb{E}[\xi_t^2]\mapsto 0 \text{   as  } t\mapsto T.
	\end{equation*}
	Moreover, we show that the solution to the MV-SDE
	\eqref{eqintroMV-SDEgeneral} satisfies the pinned property.
	
	From a modeling perspective, as previously mentioned, these types of
	processes have potential applications across various fields, including
	finance, physics, and biology. Here, we highlight a possible application
	within the information-based approach introduced in \cite{BHM2007} and
	further developed in \cite{BBE}, \cite{EL}, \cite{EHL}, \cite{EHL(Levy)},
	\cite{HHM}, \cite{HHM2015}, \cite{L1}, \cite{L2}, \cite{L3}, \cite{RuYu} and
	references therein. Specifically, the MV-SDEs considered here possess the
	pinned property, meaning that $\xi_t$ almost surely converges to zero as $t$
	approaches $T$. This makes them well-suited for modeling the noise associated
	with the flow of information about a non-defaultable cash flow $H_T$ payable
	at time $T$. The vanishing of $\xi$ at $T$ aligns with the fact that
	investors have perfect information about $H_T$ at maturity, thereby capturing
	the dynamics of information revelation over time. The added value of the
	resulting model lies in the fact that including expectations can potentially
	offer more accurate predictions by integrating a form of rational
	expectation. This means agents (like investors) act based on their
	expectations of future states, which can lead to more realistic modeling of
	market behavior. For instance, in stock price modeling, including the
	expectation can help in capturing the anticipatory actions of market
	participants, leading to better pricing models.
	
	The paper is structured as follows: Section 2 introduces McKean–Vlasov
	stochastic differential equations of bridge type in which the drift depends
	on a power-weighted expectation of the process. Section 3 focuses on the case
	where the drift is driven by a power-weighted second moment of the process.
	In Section 4 we examine the general framework, where both the drift and
	volatility depend on time and the expectation of a general function of the
	process.  Finally, in Appendix A, we present several supporting lemmas for the main results of this paper.
	\section{McKean–Vlasov SDEs with Power-Weighted Expectation in the Drift}
	\hspace{0,6cm} McKean–Vlasov stochastic differential equations (MV-SDEs)
	provide a fundamental framework for modeling systems where the dynamics of an
	individual process depend on collective characteristics such as the mean or
	distribution of the process. These equations naturally arise in various
	fields, including finance, physics, and population dynamics. 	
	
	In this section, we study McKean–Vlasov SDEs of bridge type. Since they allow
	for an explicit solution we start with the case of MV-SDEs where the drift
	depends on the expectation of the process, raised to a parameter $\alpha\geq
	0$. The dynamics are described by the equation:
	\begin{equation}
		\left\{\begin{array}{l}
			\mathrm{d} X_t=-(\mathbb{E}[X_t])^{\alpha} [X_t/(T-t)] \mathrm{d} t+\mathrm{d} W_t,\,\, t<T,\\
			X_0=x>0
		\end{array}\right.\label{eq1}
	\end{equation}	
	where $W$ is a Brownian motion, $\nu=1$,  and $\alpha>0$ is a parameter,
	which modulates the influence of the mean of the process or more technical
	mean-field.
	When $\alpha=0$ the equation reduces to the classical SDE for the standard
	Browian bridge of length $T$ between $x$ and $0$.
	We focus on studying the existence and uniqueness of
	solutions to this equation and investigating their basic properties.
	  Before addressing the existence and uniqueness of this type of process, let us first recall the following:

	\begin{proposition}
		The MV-SDE \eqref{eq1} has an explicit solution given by
		\begin{equation}
			X_t^{(\alpha),T}=\mathbb{E}\Big[X_t^{(\alpha),T}\Big]\left(1+\dint_0^t\dfrac{1}{\mathbb{E}\Big[X_s^{(\alpha),T}\Big]}\mathrm{d}W_s\right), t<T,\label{eqsolutionExp}
		\end{equation}
		where
		\begin{equation}
			\mathbb{E}\Big[X_t^{(\alpha),T}\Big]=\left(\dfrac{1}{a_\alpha-\alpha \log(T-t)}\right)^{\frac{1}{\alpha}},\label{eqExpalpha}
		\end{equation}
		and
		\begin{equation}
			a_{\alpha}=\frac{1}{x^{\alpha}}+\alpha \log(T).
		\end{equation}
	\end{proposition}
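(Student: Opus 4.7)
The plan is to exploit the fact that, although the SDE is nonlinear through the McKean--Vlasov term $(\mathbb{E}[X_t])^{\alpha}$, it is linear in the state variable $X_t$ once the mean function is treated as a known deterministic quantity. I would therefore split the proof into two layers: first a scalar ODE for $m(t):=\mathbb{E}[X_t]$, and then a time-inhomogeneous linear SDE driven by $W$ with coefficients determined by $m$.

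To derive the ODE, I would take expectations on both sides of \eqref{eq1}. Since the stochastic integral is a martingale with zero expectation (justified a posteriori by the positivity of $m$ and the integrability of the resulting candidate solution), one obtains
\begin{equation*}
m'(t)=-\frac{m(t)^{\alpha+1}}{T-t},\qquad m(0)=x.
\end{equation*}
This is separable: writing $m^{-\alpha-1}\,dm=-dt/(T-t)$ and integrating yields $m(t)^{-\alpha}=C'-\alpha\log(T-t)$, and imposing the initial condition $m(0)=x$ fixes $C'=x^{-\alpha}+\alpha\log T=a_{\alpha}$, giving the claimed formula \eqref{eqExpalpha} for $\mathbb{E}[X_t^{(\alpha),T}]$.

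With $m(t)$ explicit, the SDE becomes the linear equation $dX_t=-\beta(t)X_t\,dt+dW_t$ with $\beta(t)=m(t)^{\alpha}/(T-t)=1/\bigl[(T-t)(a_{\alpha}-\alpha\log(T-t))\bigr]$. Using the integrating factor $E(t)=\exp\!\bigl(\int_0^t\beta(s)\,ds\bigr)$ and the substitution $u=a_{\alpha}-\alpha\log(T-s)$, which gives $du=\alpha\,ds/(T-s)$, I would compute
\begin{equation*}
\int_0^t\beta(s)\,ds=\frac{1}{\alpha}\log\!\left(\frac{a_{\alpha}-\alpha\log(T-t)}{a_{\alpha}-\alpha\log T}\right)=\log\!\left(\frac{x}{m(t)}\right),
\end{equation*}
so that $E(t)^{-1}=m(t)/x$ and $E(s)=x/m(s)$. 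The variation-of-constants formula then produces $X_t=E(t)^{-1}\bigl(x+\int_0^t E(s)\,dW_s\bigr)=m(t)\bigl(1+\int_0^t m(s)^{-1}\,dW_s\bigr)$, which is exactly \eqref{eqsolutionExp}. Finally, I would close the argument by checking self-consistency: the expectation of this explicit process equals $m(t)$, so the McKean--Vlasov coupling is indeed satisfied.

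The main technical point, rather than any hard calculation, is justifying the fixed-point/consistency step: one must know a priori that the mean exists and that $m(t)>0$ on $[0,T)$ in order to derive and use the ODE, and one must ensure that $\int_0^t m(s)^{-1}\,dW_s$ is a well-defined $L^2$ martingale on every compact subinterval of $[0,T)$. Both facts follow from the explicit formula for $m$ (it is strictly positive and continuous on $[0,T)$), so the derivation and the verification close the loop cleanly.
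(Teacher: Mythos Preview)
Your proposal is correct and follows essentially the same route as the paper: reduce the McKean--Vlasov equation to a linear SDE by first determining the mean function, solve that linear SDE via the integrating factor, and then verify self-consistency. The only cosmetic difference is that the paper presents the argument in guess-and-verify form---it posits $\beta(t)=1/(a_\alpha-\alpha\log(T-t))$ up front, invokes the linear-SDE lemma, and then checks $\beta(t)=(\mathbb{E}[\bar X_t])^{\alpha}$---whereas you first motivate $\beta$ by formally deriving the ODE for $m(t)$; this avoids the slight circularity you flag, but the substance is identical.
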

	\begin{proof}
		We consider the following SDE:
		\begin{equation}
			\left\{\begin{array}{l}
				\mathrm{d} \bar{X}_t=-\beta(t)[\bar{X}_t/(T-t)] \mathrm{d} t+\mathrm{d} W_t,\,\, t<T\\
				\bar{X}_0=x,
			\end{array}\right.\label{eqSDEbetanew}
		\end{equation}		

		where
		\begin{equation}
			\beta(t)=\dfrac{1}{a_\alpha-\alpha \log(T-t)}, \,\,t<T.
		\end{equation}
		Since  $\beta:[0,T)\longrightarrow \mathbb{R}$ is a continuous function, it follows from Lemma \ref{lmsdebeta} that the SDE \eqref{eqSDEbetanew} has a unique strong solution. Moreover, the solution has the following explicit expression: for all $t \in [0, T)$,
		\begin{multline}
			\bar{X}_t=\exp\left(- \int_{0}^{t}  \frac{1}{(T-u)(a_\alpha-\alpha \log(T-u))} \, \mathrm{d}u\right)\\\left[x+\int_{0}^{t}\exp\left( \int_{0}^{s}  \frac{1}{(T-u)(a_\alpha-\alpha \log(T-u))} \, \mathrm{d}u\right)  \mathrm{d}W_s\right]\\
			=\dfrac{1}{(a_\alpha-\alpha \log(T-t))^{\frac{1}{\alpha}}}\left[1+\int_{0}^{t}(a_\alpha-\alpha \log(T-s))^{\frac{1}{\alpha}}\mathrm{d}W_s\right].
		\end{multline}
		Using the above explicit expression of $\bar{X}$, we see that
		\begin{equation}
			\mathbb{E}[\bar{X}_t]=\dfrac{1}{(a_\alpha-\alpha \log(T-t))^{\frac{1}{\alpha}}},
		\end{equation}
		which implies that $\beta_t=\left(\mathbb{E}[\bar{X}_t]\right)^{\alpha}$. Thus, it follows from \eqref{eqSDEbetanew} that the SDE \eqref{eq1} has a unique strong solution given by
		\begin{equation}
			X_t^{(\alpha),T}=\mathbb{E}\Big[X_t^{(\alpha),T}\Big]\left(1+\dint_0^t\dfrac{1}{\mathbb{E}\Big[X_s^{(\alpha),T}\Big]}\mathrm{d}W_s\right), t<T,
		\end{equation}
		where
		\begin{equation}
			\mathbb{E}\Big[X_t^{(\alpha),T}\Big]=\left(\dfrac{1}{a_\alpha-\alpha \log(T-t)}\right)^{\frac{1}{\alpha}}.
		\end{equation}
		This completes the proof.
	\end{proof}
	\begin{proposition}
		The process $X^{(\alpha),T}$ is a Gaussian process with first moment and covariance function given by:
		\begin{equation}
			\mathbb{E}\Big[X_t^{(\alpha),T}\Big]=\left(\dfrac{1}{a_\alpha-\alpha \log(T-t)}\right)^{\frac{1}{\alpha}}
		\end{equation}
		and
		\begin{multline}
			\text{Cov}(X_s^{(\alpha),T},X_t^{(\alpha),T})=T\,\alpha^{\frac{2}{\alpha}}\exp\Big(\frac{1}{\alpha x^{\alpha}}\Big)\mathbb{E}\Big[X_s^{(\alpha),T}\Big]\mathbb{E}\Big[X_t^{(\alpha),T}\Big]\\
			\times\bigg[\gamma\left(\frac{\alpha+2}{\alpha},\frac{1}{\alpha x^{\alpha}}-\log\left(\frac{T-s}{T}\right)\right)-\gamma\left(\frac{\alpha+2}{\alpha},\frac{1}{\alpha x^{\alpha}}\right)\bigg],
		\end{multline} 
		where,
		$\gamma$ is the lower incomplete gamma function, i.e.,
		\begin{equation}
			\gamma(b,x)=\int_0^{x}u^{b-1}\exp(-u)\mathrm{d}u.
		\end{equation}
	\end{proposition}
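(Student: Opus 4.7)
The plan is to exploit the explicit representation of $X^{(\alpha),T}$ derived in the previous proposition, which writes the solution as a deterministic function times the sum of a constant and a Wiener integral with a deterministic integrand. Since $\int_0^t (a_\alpha-\alpha\log(T-s))^{1/\alpha}\,\mathrm{d}W_s$ is Gaussian (being a stochastic integral of a deterministic, locally square-integrable function against Brownian motion), the process $X^{(\alpha),T}$, obtained by a deterministic affine transformation of this integral, is itself Gaussian. The mean formula has already been established.

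For the covariance, I would fix $0\le s\le t<T$ and set $f(u):=(a_\alpha-\alpha\log(T-u))^{1/\alpha}$, so that $\mathbb{E}[X_u^{(\alpha),T}]=1/f(u)$ and
\begin{equation*}
X_u^{(\alpha),T}-\mathbb{E}[X_u^{(\alpha),T}]=\frac{1}{f(u)}\int_0^u f(r)\,\mathrm{d}W_r.
\end{equation*}
Then the Itô isometry gives
\begin{equation*}
\mathrm{Cov}(X_s^{(\alpha),T},X_t^{(\alpha),T})=\frac{1}{f(s)f(t)}\int_0^{s}f(r)^{2}\,\mathrm{d}r=\mathbb{E}[X_s^{(\alpha),T}]\,\mathbb{E}[X_t^{(\alpha),T}]\int_0^{s}(a_\alpha-\alpha\log(T-r))^{2/\alpha}\,\mathrm{d}r.
\end{equation*}

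The remaining task, which is the only real computation, is to convert the last integral into a difference of lower incomplete gamma functions. I would perform two successive changes of variables. First, substitute $v=a_\alpha-\alpha\log(T-r)$, so that $\mathrm{d}r=\alpha^{-1}e^{(a_\alpha-v)/\alpha}\,\mathrm{d}v$ and the integration limits become $v_0=a_\alpha-\alpha\log T=1/x^{\alpha}$ and $v_s=1/x^{\alpha}-\alpha\log\bigl(\tfrac{T-s}{T}\bigr)$. Second, set $w=v/\alpha$ to obtain
\begin{equation*}
\int_0^{s}f(r)^{2}\,\mathrm{d}r=\alpha^{2/\alpha}\,e^{a_\alpha/\alpha}\int_{1/(\alpha x^{\alpha})}^{v_s/\alpha}w^{\frac{\alpha+2}{\alpha}-1}e^{-w}\,\mathrm{d}w,
\end{equation*}
and I would use the identities $e^{a_\alpha/\alpha}=T\,e^{1/(\alpha x^{\alpha})}$ and $v_s/\alpha=\tfrac{1}{\alpha x^{\alpha}}-\log\bigl(\tfrac{T-s}{T}\bigr)$ to read off the claimed formula by the definition of $\gamma(b,x)$ recalled in the statement.

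The main obstacle is simply bookkeeping in the two substitutions; the Gaussian conclusion and the reduction to a deterministic integral via Itô isometry are immediate from the explicit form in \eqref{eqsolutionExp}. One small point to verify carefully is the asymmetry $s\le t$ in the Itô isometry computation, so that the integrand is truncated at $s$ (not $t$), which is precisely what produces the argument $\tfrac{1}{\alpha x^{\alpha}}-\log\bigl(\tfrac{T-s}{T}\bigr)$ in the upper incomplete gamma limit.
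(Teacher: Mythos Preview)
Your proposal is correct and follows essentially the same route as the paper: both use the explicit representation \eqref{eqsolutionExp} to identify the process as a deterministic affine image of a Wiener integral (hence Gaussian with the stated mean), apply the It\^o isometry to reduce the covariance to $\mathbb{E}[X_s^{(\alpha),T}]\,\mathbb{E}[X_t^{(\alpha),T}]\int_0^s(a_\alpha-\alpha\log(T-r))^{2/\alpha}\,\mathrm{d}r$, and then perform the same pair of substitutions $v=a_\alpha-\alpha\log(T-r)$, $w=v/\alpha$ to rewrite the integral as a difference of lower incomplete gamma values. Your bookkeeping of the limits and of the identity $e^{a_\alpha/\alpha}=T\,e^{1/(\alpha x^\alpha)}$ is accurate.
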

	\begin{proof}
		From \eqref{eqsolutionExp} we can see that the process $X^{(\alpha),T}$ is a Gaussian process with first moment given by
		\begin{equation}
			\mathbb{E}\Big[X_t^{(\alpha),T}\Big]=\left(\dfrac{1}{a_\alpha-\alpha \log(T-t)}\right)^{\frac{1}{\alpha}}.
		\end{equation}
		For the covariance function, for $s, t\in \mathbb{R}_+$ such that $s\leq t$, we have
		\begin{align}
			\text{Cov}(X_s^{(\alpha),T},X_t^{(\alpha),T})=\mathbb{E}\Big[X_s^{(\alpha),T}\Big]\mathbb{E}\Big[X_t^{(\alpha),T}\Big]\int_{0}^{s}(a_\alpha-\alpha \log(T-u))^{\frac{2}{\alpha}}\mathrm{d}u.
		\end{align}
		For the integral on the right-hand side of the previous equation, we use the following:
		\begin{multline}
			\int_{0}^{s}(a_\alpha-\alpha \log(T-u))^{\frac{2}{\alpha}}\mathrm{d}u=\alpha^{\frac{2}{\alpha}}\exp\Big(\frac{a_\alpha}{\alpha}\Big)\frac{1}{\alpha}\int_{\frac{1}{x^\alpha}}^{a_\alpha-\alpha \log(T-s)}u^{\frac{2}{\alpha}}\exp\left(-\frac{u}{\alpha}\right)\mathrm{d}u\\
			=T\,\alpha^{\frac{2}{\alpha}}\exp\Big(\frac{1}{\alpha x^{\alpha}}\Big)\int_{\frac{1}{\alpha x^\alpha}}^{\frac{a_\alpha}{\alpha}- \log(T-s)}u^{\frac{\alpha+2}{\alpha}-1}\exp\left(-u\right)\mathrm{d}u\\
			=T\,\alpha^{\frac{2}{\alpha}}\exp\Big(\frac{1}{\alpha x^{\alpha}}\Big)\bigg[\gamma\left(\frac{\alpha+2}{\alpha},\frac{1}{\alpha x^{\alpha}}-\log\left(\frac{T-s}{T}\right)\right)-\gamma\left(\frac{\alpha+2}{\alpha},\frac{1}{\alpha x^{\alpha}}\right)\bigg].
		\end{multline}
		This completes the proof.
	\end{proof}
	From the explicit expression of the process $X^{(\alpha),T}$, we can deduce that for all $t<T$
	\begin{equation}
		\dfrac{X_t^{(\alpha),T}}{\mathbb{E}\left[X_t^{(\alpha),T}\right]}=1+\dint_0^t\dfrac{1}{\mathbb{E}\Big[X_s^{(\alpha),T}\Big]}\mathrm{d}W_s.
	\end{equation}
	Hence, we have the following Corollary:
	\begin{corollary}\label{corindinc}
		The process $M^{(\alpha),T}=(M^{(\alpha),T}_t ,t<T)$ given by
		\begin{equation}
			M^{(\alpha),T}_t=\left(a_\alpha-\alpha \log(T-t)\right)^{\frac{1}{\alpha}}X_t^{(\alpha),T}
		\end{equation}
		is with independent increments.
	\end{corollary}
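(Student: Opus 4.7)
The plan is to reduce the independent-increment property of $M^{(\alpha),T}$ to the familiar fact that Wiener integrals of deterministic functions over disjoint time intervals are independent. The key observation is that the identity displayed just before the corollary already rewrites $X_t^{(\alpha),T}/\mathbb{E}[X_t^{(\alpha),T}]$ as a stochastic integral against $W$ with deterministic integrand. Since $1/\mathbb{E}[X_s^{(\alpha),T}] = (a_\alpha-\alpha\log(T-s))^{1/\alpha}$, we obtain
\begin{equation*}
M^{(\alpha),T}_t = \bigl(a_\alpha - \alpha \log(T-t)\bigr)^{1/\alpha} X_t^{(\alpha),T} = 1 + \int_0^t \bigl(a_\alpha-\alpha\log(T-s)\bigr)^{1/\alpha}\,\mathrm{d}W_s.
\end{equation*}

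First, I would justify that the integrand $s\mapsto(a_\alpha-\alpha\log(T-s))^{1/\alpha}$ is a continuous, deterministic function on $[0,T)$ and belongs to $L^2([0,t])$ for every $t<T$, so the Wiener integral is well defined and produces a centered Gaussian process. Second, I would write, for $0\le s\le t<T$,
\begin{equation*}
M^{(\alpha),T}_t - M^{(\alpha),T}_s = \int_s^t \bigl(a_\alpha-\alpha\log(T-u)\bigr)^{1/\alpha}\,\mathrm{d}W_u,
\end{equation*}
and note that, for any finite partition $0=t_0<t_1<\cdots<t_n<T$, the increments are Wiener integrals of deterministic functions supported on the pairwise disjoint intervals $(t_{k-1},t_k]$. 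Being jointly Gaussian with vanishing pairwise covariance (by It\^o's isometry, since the integrands have disjoint supports), they are independent.

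Finally, I would conclude that $M^{(\alpha),T}$ has independent increments. No real obstacle arises: once we identify the process as $1$ plus a Wiener integral of a deterministic function, the independence of increments is an immediate, well-known consequence of It\^o's isometry together with the fact that uncorrelated jointly Gaussian random variables are independent.
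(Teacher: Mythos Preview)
Your proposal is correct and matches the paper's reasoning exactly: the corollary is stated without proof, immediately after the identity $X_t^{(\alpha),T}/\mathbb{E}[X_t^{(\alpha),T}]=1+\int_0^t (1/\mathbb{E}[X_s^{(\alpha),T}])\,\mathrm{d}W_s$, and your argument simply makes explicit the standard step that a constant plus a Wiener integral of a deterministic function has independent increments.
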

	\begin{proposition}
		For $\alpha>0$, the process $X^{(\alpha),T}$ is not the bridge of length $T$ of a Gaussian Markov process. That is, there exists no non-degenerate Gaussian Markov process $U$ such as, for any functional $F$ we have
		$$\mathbb{E}[F(U_t,\,t\leq T)\vert U_T=0]=\mathbb{E}[F(X_t^{(\alpha),T},\,t\leq T)].$$
	\end{proposition}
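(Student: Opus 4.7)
I would argue by contradiction. Assume that such a non-degenerate Gaussian Markov process $U$ (meaning, in particular, $\mathrm{Var}(U_T) > 0$) exists. Since both $X^{(\alpha),T}$ and the process $U$ conditioned on $U_T = 0$ are Gaussian, the statement of the proposition reduces, on $[0,T)$, to the equality of their mean and covariance functions. The standard Gaussian conditioning formula gives, for $s \le t < T$,
\[ \mathrm{Cov}(U_s, U_t \mid U_T = 0) = K^U(s,t) - \frac{K^U(s,T)\, K^U(t,T)}{K^U(T,T)}, \]
and the Markov property of $U$ forces the factorization $K^U(s,t) = \tilde f(s)\tilde g(t)$ for $s \le t$, so the above bridge covariance takes the form $\tilde f(s)\bigl(\tilde g(t) - c\,\tilde f(t)\bigr)$, where $c := \tilde g(T)/\tilde f(T)$.

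On the other hand, the proposition just above provides the explicit factored expression
\[ \mathrm{Cov}\bigl(X_s^{(\alpha),T}, X_t^{(\alpha),T}\bigr) = \psi(s)\phi(s)\,\psi(t), \qquad s \le t < T, \]
with $\psi(t) := (a_\alpha - \alpha \log(T-t))^{-1/\alpha}$ and $\phi(s) := \int_0^s (a_\alpha - \alpha \log(T-u))^{2/\alpha}\,\mathrm{d}u$. Equating the two factorizations on $\{0 \le s \le t < T\}$, where the right-hand side is strictly positive, and using the uniqueness (up to a non-zero scalar) of rank-one factorizations of a non-vanishing covariance, I obtain some $\lambda > 0$ such that
\[ \tilde f(s) = \lambda\,\psi(s)\phi(s), \qquad \tilde g(t) - c\,\tilde f(t) = \lambda^{-1}\psi(t), \quad t \in [0,T). \]

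The contradiction is obtained by letting $t \uparrow T$. On the one hand $\psi(T) = 0$, since $a_\alpha - \alpha\log(T-t) \to +\infty$; on the other hand, a short computation via the substitution $v = T-u$ shows that, because $\alpha>0$,
\[ \phi(T) = \int_0^T (a_\alpha - \alpha\log v)^{2/\alpha}\,\mathrm{d}v < \infty, \]
the integrand being polynomial in $\log(1/v)$ near $v = 0$, hence reducible to a finite upper incomplete gamma integral. Consequently $\tilde f(T) = \lambda\,\psi(T)\phi(T) = 0$ and $\tilde g(T) = \lambda^{-1}\psi(T) + c\,\tilde f(T) = 0$, so $K^U(T,T) = \tilde f(T)\tilde g(T) = 0$, contradicting $\mathrm{Var}(U_T) > 0$.

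The main obstacle is a clean justification of the rank-one factorization: one must exclude degenerate scenarios in which $\tilde f$ or $\tilde g - c\tilde f$ vanishes on a non-trivial subset of $[0,T)$, as these would force $K^X$ to vanish where it does not. This is handled by appealing to the strict positivity of $\psi\phi\psi$ on $(0,T)$. The finiteness $\phi(T) < \infty$ is the other key analytic input and is precisely what fails in the limit $\alpha \downarrow 0$, consistently with the fact that the classical Brownian bridge \emph{is} the bridge of a non-degenerate Gaussian Markov process.
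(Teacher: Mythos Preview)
Your proof is correct and follows essentially the same route as the paper: match the factored bridge covariance $\tilde f(s)\bigl(\tilde g(t)-c\,\tilde f(t)\bigr)$ against the explicit $\psi(s)\phi(s)\,\psi(t)$, then exploit $\psi(T)=0$ together with $\phi(T)<\infty$. The only cosmetic difference is in the endgame: the paper divides the two identifications to obtain $\rho(t)/(\rho(T)-\rho(t))=\text{const}\cdot\phi(t)$ and invokes that the left side blows up while the right stays bounded, whereas you let $t\uparrow T$ in each identification separately to force $\tilde f(T)=\tilde g(T)=0$ and hence $\mathrm{Var}(U_T)=0$; both contradictions rest on the same finiteness of the incomplete-gamma integral.
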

	\begin{proof}
		Let $U$ be a non--degenerate continuous Gaussian Markov process that starts from $x$. Let $U^T$ be the bridge between $x$ and $0$ of length $T$ associated with $U$. We have, for $s, t\in \mathbb{R}_+$ such that $s\leq t$,
		\begin{equation}
			\mathbb{E}[U_t^{T}]=\mathbb{E}[U_t]-\frac{\text{Cov}(U_t,U_{T})}{\text{Var}(U_{T})}\mathbb{E}[U_T]
		\end{equation}
		\begin{align}
			\text{Cov}(U_s^{T},U_t^{T})=\text{Cov}(U_s,U_{t})-\frac{\text{Cov}(U_t,U_{T}) \text{Cov}(U_s,U_{T})}{\text{Var}(U_{T})}.
		\end{align}
		Since $Y$ is Markovian, it follows from \cite[page 86]{RY} that the covariance function takes the following form
		\begin{equation}
			\text{Cov}(U_s,U_{t}) = a(s)a(t)\rho(\inf(s,t))
		\end{equation}
		where $a$ is continuous and does not vanish, and $\rho$ is continuous, strictly positive, and non-decreasing. Hence,
		\begin{equation}
			\mathbb{E}[U_t^{T}]=\mathbb{E}[U_t]-\frac{a(t)\rho(t)}{a(T)\rho(T)}\mathbb{E}[U_T]
		\end{equation}
		\begin{align}
			\text{Cov}(U_s^{T},U_t^{T})=a(s)a(t)\rho(s)\left(1-\frac{\rho(t)}{\rho(T)}\right).
		\end{align}
		This implies that,
		\begin{equation}
			a(t)\rho(t)=c_1(\alpha,x)\mathbb{E}\Big[X_t^{(\alpha),T}\Big]\bigg[\gamma\left(\frac{\alpha+2}{\alpha},\frac{1}{\alpha x^{\alpha}}-\log\left(\frac{T-t}{T}\right)\right)-\gamma\left(\frac{\alpha+2}{\alpha},\frac{1}{\alpha x^{\alpha}}\right)\bigg]
		\end{equation}
		and
		\begin{equation}
			a(t)\left(1-\frac{\rho(t)}{\rho(T)}\right)=c_2(\alpha,x)\mathbb{E}\Big[X_t^{(\alpha),T}\Big]
		\end{equation}
		where $c_1(\alpha,x)$ and $c_2(\alpha,x)$ are strictly positive constants that depend only on $c$ and $x$.
		Thus,
		\begin{equation}
			\dfrac{\rho(t)}{\rho(T)-\rho(t)}=\frac{c_1(\alpha,x)}{c_2(\alpha,x)}\bigg[\gamma\left(\frac{\alpha+2}{\alpha},\frac{1}{\alpha x^{\alpha}}-\log\left(\frac{T-t}{T}\right)\right)-\gamma\left(\frac{\alpha+2}{\alpha},\frac{1}{\alpha x^{\alpha}}\right)\bigg].\label{eqbridgecontradiction}
		\end{equation}
		Since $\rho$ is a continuous, strictly positive, and non-decreasing function, the quantity $\frac{\rho(t)}{\rho(T)-\rho(t)}$ tends to infinity as $t$ approaches $T$. It follows from \eqref{eqbridgecontradiction} that $\Gamma\left(\frac{\alpha+2}{\alpha}\right)$ would tend to infinity, which is a contradiction because $\Gamma\left(\frac{\alpha+2}{\alpha}\right)$ is finite for $\alpha>0$. Here, $\Gamma$ is the complete gamma function.
	\end{proof}
	We now show that even if the process $X^{(\alpha),T}$ is not a bridge of a Gaussian Markov process, $X^{(\alpha),T}$ is still a pinned process. Specifically,
	\begin{equation}
		\mathbb{P}\left(\lim\limits_{t \rightarrow T} X_t^{(\alpha),T}=0\right)=1.
	\end{equation}
	\begin{proposition}
		The process $X^{(\alpha),T}$ is a pinned process.
	\end{proposition}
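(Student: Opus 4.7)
The plan is to exploit the explicit representation obtained in the previous proposition,
\[
X_t^{(\alpha),T}=\dfrac{1}{(a_\alpha-\alpha\log(T-t))^{1/\alpha}}\left[1+\int_0^t (a_\alpha-\alpha\log(T-s))^{1/\alpha}\,\mathrm{d}W_s\right],
\]
and show separately that the prefactor vanishes as $t\to T$ while the bracketed expression stays almost surely bounded.

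First, I would set $M_t:=\int_0^t (a_\alpha-\alpha\log(T-s))^{1/\alpha}\,\mathrm{d}W_s$, which is a continuous local martingale on $[0,T)$, and compute its quadratic variation
\[
\langle M\rangle_t=\int_0^t (a_\alpha-\alpha\log(T-s))^{2/\alpha}\,\mathrm{d}s=\int_{T-t}^{T}(a_\alpha-\alpha\log u)^{2/\alpha}\,\mathrm{d}u.
\]
The key technical step is to check that $\langle M\rangle_T<\infty$. For $u$ close to $0$ the integrand behaves like $(\alpha\log(1/u))^{2/\alpha}$, which is integrable near $0$ since a power of $\log(1/u)$ is dominated by $u^{-\varepsilon}$ for any $\varepsilon>0$. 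Hence $\langle M\rangle_T<\infty$ for every $\alpha>0$.

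Once finiteness of $\langle M\rangle_T$ is established, the standard martingale convergence theorem for continuous $L^2$-martingales (or a time-change to Brownian motion followed by the law of the iterated logarithm/Dambis--Dubins--Schwarz) yields that $M_t$ converges almost surely to a finite random variable $M_T$ as $t\to T$. Consequently $1+M_t$ is almost surely bounded on $[0,T]$. On the other hand, since $\alpha>0$, we have $\log(T-t)\to -\infty$ and therefore
\[
\bigl(a_\alpha-\alpha\log(T-t)\bigr)^{-1/\alpha}\longrightarrow 0\quad\text{as }t\to T.
\]
Multiplying the bounded factor by the deterministic factor tending to zero gives $X_t^{(\alpha),T}\to 0$ almost surely, which is the pinned property.

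The only genuinely delicate point is the integrability of $(a_\alpha-\alpha\log u)^{2/\alpha}$ at $u=0$; everything else is a direct consequence of the explicit formula and classical martingale convergence. No additional structural property of the process (such as Markovianity or being the bridge of a Gaussian Markov process, which was just ruled out) is required.
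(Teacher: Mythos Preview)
Your argument is correct and follows essentially the same route as the paper: both write $X_t^{(\alpha),T}$ as a deterministic factor $(a_\alpha-\alpha\log(T-t))^{-1/\alpha}\to 0$ times $1+M_t$, and both establish almost sure convergence of $M_t$ by showing $\int_0^T(a_\alpha-\alpha\log(T-u))^{2/\alpha}\,\mathrm{d}u<\infty$. The only cosmetic difference is that the paper evaluates this integral explicitly via the incomplete gamma function and then invokes Kolmogorov's convergence theorem for the independent increments, whereas you bound the integrand by $u^{-\varepsilon}$ and appeal directly to the $L^2$-martingale convergence theorem; these are equivalent here since $M$ is a Wiener integral with independent Gaussian increments.
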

	\begin{proof}
		From Corollary \ref{corindinc}, we see that for all $t<T$
		\begin{equation}
			X_t^{(\alpha),T}=\dfrac{M^{(\alpha),T}_t}{\left(a_\alpha-\alpha \log(T-t)\right)^{\frac{1}{\alpha}}}\label{eqXalphaMalpha}
		\end{equation}
		where $M^{(\alpha),T}$ is a process with independent increments. The process \( M^{(\alpha),T} \) can be decomposed as follows:
		\begin{equation}
			M^{(\alpha),T}_t = 1 + \sum_{k=1}^{n_t} \left( M^{(\alpha),T}_{t_{k+1}} - M^{(\alpha),T}_{t_k} \right),\label{eqMassum}
		\end{equation}
		where \( t_k \) is a partition of \( [0,t] \) and \( n_t \to \infty \) as \( t \to T \). We have,
		\begin{align*}
			\sum_{k=1}^{\infty} \mathbb{E}\left[\left( M^{(\alpha),T}_{t_{k+1}} - M^{(\alpha),T}_{t_k} \right)^2\right]&\leq 	\sum_{k=1}^{\infty}\displaystyle\int_{t_k}^{t_{k+1}}(a_\alpha-\alpha \log(T-u))^{\frac{2}{\alpha}}\mathrm{d}u\\
			&\leq \displaystyle\int_{0}^{T}(a_\alpha-\alpha \log(T-u))^{\frac{2}{\alpha}}\mathrm{d}u\\
			&\leq T\,\alpha^{\frac{2}{\alpha}}\exp\Big(\frac{1}{\alpha x^{\alpha}}\Big)\Gamma\left(\frac{\alpha+2}{\alpha},\frac{1}{\alpha x^{\alpha}}\right)<+\infty,
		\end{align*}
		where,
		$\Gamma$ is the upper incomplete gamma function, i.e.,
		\begin{equation}
			\Gamma(b,x)=\int_x^{+\infty}u^{b-1}\exp(-u)\mathrm{d}u.
		\end{equation}
		Hence, using the Kolmogorov’s convergence theorem, $\sum\limits_{k=1}^{+\infty} \left( M^{(\alpha),T}_{t_{k+1}} - M^{(\alpha),T}_{t_k} \right)$ converges almost surely. Thus, it follows from \eqref{eqXalphaMalpha} and \eqref{eqMassum} that 
		\begin{equation*}
			\mathbb{P}\left(\lim\limits_{t \rightarrow T} X_t^{(\alpha),T}=0\right)=1.
		\end{equation*}
	
		Which completes the proof. 
	\end{proof} 

	\section{McKean–Vlasov SDEs with Power-Weighted Second Moment in the Drift} 
	\hspace{0,6cm}In the previous section, we examined a class of McKean–Vlasov SDEs where the drift term depends on a power-weighted expectation of the process. This formulation provided a foundation for studying systems driven by the mean behavior of the process. However, in many applications, the second moment of the process—rather than its mean—plays a crucial role in driving the dynamics. This is particularly relevant in settings where the mean is zero, rendering the variance and second moment equivalent. In this section, we focus on McKean–Vlasov SDEs where the drift depends on the second moment of the process, raised to a power parameter $\alpha>0$. This framework captures interactions influenced by the overall magnitude of fluctuations in the system, regardless of the mean behavior. Such models arise in various fields, including physics, population dynamics, and finance, where the size of deviations from equilibrium directly impacts the evolution of the process. More precisely, we study MV-SDEs of the form:
	\begin{equation}
		\left\{\begin{array}{l}
			\mathrm{d} Y_t=-(\mathbb{E}[Y_t^2])^{\alpha}\,[Y_t/(T-t)]\mathrm{d} t+\mathrm{d} W_t,\,\, t<T\\
			Y_0=0,
		\end{array}\right.\label{eqalphavariance}
	\end{equation}		 
	where \(\mathbb{E}[X_t^2]\)  is the second moment of \(X_t\), and \(\alpha > 0\) controls the influence of the second moment on the drift term. This section is structured as follows: first, we analyze the case $\alpha=1$, where explicit solutions can be derived. This special case serves as a benchmark for understanding the general dynamics. Next, we extend our study to the general case $\alpha>0$, providing existence and uniqueness results and exploring the fundamental properties of the solutions. 
	\subsection{The Linear Second Moment Case ($\alpha=1$)}
	\hspace{0,6cm}The case $\alpha=1$ represents the simplest form of second moment-driven McKean–Vlasov SDEs. Here, the second moment enters the drift in a linear manner, allowing for explicit solutions. Such solutions provide a clear understanding of the interaction between the second moment and the system's evolution, serving as a baseline for exploring more complex scenarios. In this subsection, we derive the explicit solution for the MV-SDE: 	
	\begin{equation}
		\left\{\begin{array}{l}
			\mathrm{d} \mathcal{Y}_t=-\mathbb{E}[\mathcal{Y}_t^2] \,[\mathcal{Y}_t/(T-t)] \mathrm{d} t+\mathrm{d} W_t,\,\, t<T\\
			\mathcal{Y}_0=0,
		\end{array}\right.\label{eqvariance}
	\end{equation}
	and examine its properties. Let us first study the existence and the uniqueness of the MV-SDE \eqref{eqvariance}.
	\begin{proposition}
		The MV-SDE \eqref{eqvariance} has a unique strong solution.
	\end{proposition}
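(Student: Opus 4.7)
The plan is to mirror the strategy used for the proposition in Section~2: freeze the mean-field coefficient to obtain a linear SDE, exploit its explicit Gaussian solution to read off the second moment, and close the loop via a deterministic self-consistency condition. First I would introduce, for an arbitrary continuous $\beta:[0,T)\to\mathbb{R}_+$, the auxiliary linear SDE
\begin{equation*}
    \mathrm{d}\bar{\mathcal{Y}}_t = -\beta(t)\, \bar{\mathcal{Y}}_t/(T-t)\, \mathrm{d}t + \mathrm{d}W_t, \quad \bar{\mathcal{Y}}_0 = 0.
\end{equation*}
By Lemma~\ref{lmsdebeta}, this SDE has a unique strong solution $\bar{\mathcal{Y}}_t = h(t)\int_0^t h(s)^{-1}\,\mathrm{d}W_s$ with $h(t)=\exp(-\int_0^t \beta(u)/(T-u)\,\mathrm{d}u)$, so the map $m(t):=\mathbb{E}[\bar{\mathcal{Y}}_t^2] = h(t)^2\int_0^t h(s)^{-2}\,\mathrm{d}s$ is of class $C^1$ on $[0,T)$.

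Next I would encode the mean-field consistency $\beta(t)=m(t)$ as a purely deterministic problem. Differentiating $m$ yields $m'(t) = 1 - 2\beta(t)\,m(t)/(T-t)$, so imposing $\beta=m$ reduces the MV-SDE \eqref{eqvariance} to the Riccati-type initial value problem
\begin{equation*}
    m'(t) = 1 - \frac{2\, m(t)^2}{T - t}, \qquad m(0) = 0.
\end{equation*}

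The main obstacle is proving existence and uniqueness of a continuous, non-negative solution to this ODE on the full interval $[0,T)$. Local existence and uniqueness on each subinterval $[0,T-\epsilon]$ are immediate from Picard--Lindel\"of, since the right-hand side is locally Lipschitz in $m$ and continuous in $t$ away from $T$. To reach $T$ I must rule out blow-up: the solution remains non-negative (because $m'>0$ whenever $m=0$), and as soon as $m(t)^2>(T-t)/2$ the derivative $m'$ becomes negative, which yields an a priori bound of the form $0\le m(t)\le \max(m(0),\sqrt{T/2})$ independent of $\epsilon$. A standard comparison argument then extends the local solution to all of $[0,T)$, and since the bound is deterministic it also guarantees that the candidate coefficient $\beta=m$ is continuous on $[0,T)$.

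Finally, I would set $\beta(t):=m(t)$ and reapply Lemma~\ref{lmsdebeta} to obtain the unique strong solution of the corresponding linear SDE; by construction it satisfies $\mathbb{E}[\mathcal{Y}_t^2]=\beta(t)$ and hence solves \eqref{eqvariance}. Uniqueness for the MV-SDE itself is then automatic: any other solution $\tilde{\mathcal{Y}}$ has a second moment $\tilde m$ satisfying the same Riccati equation, so $\tilde m=m$ by the ODE uniqueness above, which reduces the whole question to uniqueness for the frozen-coefficient linear SDE already handled by Lemma~\ref{lmsdebeta}.
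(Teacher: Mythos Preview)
Your proposal is correct and follows essentially the same strategy as the paper: freeze the coefficient, solve the resulting linear SDE via Lemma~\ref{lmsdebeta}, show that the second moment satisfies the Riccati ODE $v'=1-2v^2/(T-t)$, and close the self-consistency loop. The only minor differences are that the paper derives the second-moment equation via It\^o's formula (rather than direct differentiation of the explicit variance formula) and takes the existence of $v$ for granted (an explicit Bessel-function solution is supplied in Lemma~\ref{ODE_variance}), whereas you provide a self-contained Picard--Lindel\"of argument with an a~priori bound; your uniqueness argument for the MV-SDE is also spelled out more explicitly than in the paper.
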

	\begin{proof}
		Let $v$ be the solution to the ODE given by:
		\begin{equation}
			\left\{\begin{array}{l}
				v'(t)=-2\,[v^2(t)/(T-t)]+1,\,t<T\\
				v(0)=0.
			\end{array}\right.\label{eqf't2}
		\end{equation}		
		We consider the following SDE:
		\begin{equation}
			\left\{\begin{array}{l}
				\mathrm{d} \bar{\mathcal{Y}}_t=-v(t)\,[\bar{\mathcal{Y}}_t/(T-t)] \mathrm{d} t+\mathrm{d} W_t,\,\, t<T\\
				Y_0=0.
			\end{array}\right.\label{eqvariancef}
		\end{equation}
		It follows from Lemma \ref{lmsdebeta} that the SDE \eqref{eqvariancef} has a unique strong solution. Moreover, for all $t<T$, we have
		\begin{equation}
			\mathbb{E}[\bar{\mathcal{Y}}_t^2]=\int_{0}^{t}\exp\left( -2\int_{s}^{t}  \frac{v(u)}{T-u} \, \mathrm{d}u\right)  \mathrm{d}s.\label{eqsecondmomentofbarY}
		\end{equation}
		Using It\^o's formula, we obtain
		\begin{equation}
			\bar{\mathcal{Y}}_t^2=-2\int_{0}^{t}v(s)\dfrac{\bar{\mathcal{Y}}^2_s}{T-s}\mathrm{d}s+2\int_{0}^{t}\bar{\mathcal{Y}}_s\mathrm{d}W_s+t.\label{eqitoY}
		\end{equation}
		It follows from \eqref{eqsecondmomentofbarY} that the process $(\int_{0}^{t}\bar{\mathcal{Y}}_s\mathrm{d}W_s, 0\leq t<T)$ is a (true) martingale. Consequently, it follows from \eqref{eqitoY} that
		\begin{equation}
			\mathrm{d}\mathbb{E}[\bar{\mathcal{Y}}_t^2]=-2v(t)\dfrac{\mathbb{E}[\bar{\mathcal{Y}}^2_t]}{T-t}\mathrm{d}t+\mathrm{d}t.\label{eqEY_t^2'}
		\end{equation}
		Using \eqref{eqf't2} and \eqref{eqEY_t^2'} we get
		\begin{align*}
			\mathrm{d}(v(t)-\mathbb{E}[\bar{\mathcal{Y}}_t^2])&=-2\dfrac{v(t)}{T-t}(v(t)-\mathbb{E}[\bar{\mathcal{Y}}_t^2])\mathrm{d}t
		\end{align*}
		Hence,
		\begin{equation}
			v(t)=\mathbb{E}[\bar{\mathcal{Y}}_t^2],\,t<T.
		\end{equation}
		Thus, we may conclude that the SDE \eqref{eqvariance} has a unique strong solution.
	\end{proof}

	In the next proposition, we provide the explicit expression for the solution of the MV-SDE \eqref{eqvariance}. 
	\begin{proposition}
		The MV-SDE \eqref{eqvariance} has an explicit solution given by
		\begin{equation}
			\mathcal{Y}_t=\dfrac{1}{\sqrt{g(t)}}\dint_0^t\sqrt{g(s)}\,\mathrm{d}W_s, t<T,\label{eqsolutionvariance}
		\end{equation}
		where,
		\begin{equation}
			g(t)= I_1(2 \sqrt{2} \sqrt{T}) K_0(2 \sqrt{2} \sqrt{T - t}) + K_1(2 \sqrt{2} \sqrt{T}) I_0(2 \sqrt{2} \sqrt{T - t}).
		\end{equation}
		Here, $I_n$ is the modified Bessel function of the first kind and $K_n$ is the modified Bessel function of the second kind.
	\end{proposition}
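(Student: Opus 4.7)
The plan is to build on the previous proposition, which already guarantees a unique strong solution $\mathcal{Y}$ realized as $\bar{\mathcal{Y}}$ from equation \eqref{eqvariancef}, with the function $v(t)=\mathbb{E}[\mathcal{Y}_t^2]$ satisfying the Riccati ODE
\begin{equation*}
v'(t)=-\frac{2v^2(t)}{T-t}+1,\qquad v(0)=0.
\end{equation*}
The candidate formula \eqref{eqsolutionvariance} suggests that $g$ should be the integrating factor squared, i.e.\ $v(t)/(T-t)=\frac{1}{2}g'(t)/g(t)$. So my first step is to linearize the Riccati via the substitution $v(t)=\tfrac{T-t}{2}\,g'(t)/g(t)$, which after a short computation turns the ODE into the linear second-order equation
\begin{equation*}
(T-t)\,g''(t)-g'(t)-2\,g(t)=0,
\end{equation*}
with the initial condition $v(0)=0$ translating into $g'(0)=0$.

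Next I would perform the change of variables $u=T-t$ and then $x=2\sqrt{2}\sqrt{u}$. Setting $\tilde g(x)=g(T-x^{2}/8)$, a direct chain-rule computation reduces the ODE to
\begin{equation*}
x^{2}\tilde g''(x)+x\,\tilde g'(x)-x^{2}\,\tilde g(x)=0,
\end{equation*}
which is precisely the modified Bessel equation of order zero. Hence
\begin{equation*}
g(t)=A\,I_{0}\bigl(2\sqrt{2}\sqrt{T-t}\bigr)+B\,K_{0}\bigl(2\sqrt{2}\sqrt{T-t}\bigr),
\end{equation*}
for constants $A,B$. Using $I_{0}'=I_{1}$, $K_{0}'=-K_{1}$ and the boundary condition $g'(0)=0$, I would fix the ratio $A:B=K_{1}(2\sqrt{2}\sqrt{T}):I_{1}(2\sqrt{2}\sqrt{T})$, producing (up to an irrelevant overall multiplicative constant which cancels in $g'/g$ and in the ratio \eqref{eqsolutionvariance}) exactly the $g$ stated in the proposition.

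Finally, I would assemble the explicit solution. From equation \eqref{eqvariancef}, variation of parameters gives
\begin{equation*}
\bar{\mathcal{Y}}_t=\exp\!\left(-\int_{0}^{t}\frac{v(u)}{T-u}\,\mathrm{d}u\right)\int_{0}^{t}\exp\!\left(\int_{0}^{s}\frac{v(u)}{T-u}\,\mathrm{d}u\right)\mathrm{d}W_s.
\end{equation*}
Because $v(u)/(T-u)=\tfrac{1}{2}g'(u)/g(u)$, the integrating factor equals $\sqrt{g(t)/g(0)}$, and the factor $\sqrt{g(0)}$ cancels between numerator and denominator, yielding precisely
\begin{equation*}
\mathcal{Y}_t=\frac{1}{\sqrt{g(t)}}\int_{0}^{t}\sqrt{g(s)}\,\mathrm{d}W_s.
\end{equation*}

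The main obstacle is spotting the correct linearizing substitution and recognizing the resulting equation as a modified Bessel equation after the nonstandard rescaling $x=2\sqrt{2}\sqrt{T-t}$; once this identification is made, the rest amounts to imposing $g'(0)=0$ to pick out the right linear combination of $I_{0}$ and $K_{0}$ and checking that the integrating factor collapses to $\sqrt{g}$. A useful sanity check I would include is that the candidate solution gives $\mathbb{E}[\mathcal{Y}_t^2]=\frac{1}{g(t)}\int_0^t g(s)\,\mathrm{d}s$ and that this coincides with $\tfrac{T-t}{2}g'(t)/g(t)=v(t)$, via the identity $\int_0^t g(s)\,\mathrm{d}s=\tfrac{T-t}{2}g'(t)$ derived by integrating the linear ODE for $g$.
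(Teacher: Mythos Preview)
Your proposal is correct and follows essentially the same route as the paper. The paper relegates the Riccati-to-Bessel reduction to an appendix lemma (Lemma~\ref{ODE_variance}), using a two-step substitution $v(t)=\tfrac{T-t}{2}\,r(t)$ followed by $r=u'/u$, whereas you collapse these into the single substitution $v(t)=\tfrac{T-t}{2}\,g'(t)/g(t)$; after the change of variables $x=2\sqrt{2}\sqrt{T-t}$ both arrive at the order-zero modified Bessel equation, impose $g'(0)=0$ to select the linear combination, and then read off the integrating factor $\sqrt{g}$ in the variation-of-parameters formula exactly as you describe.
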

	\begin{proof}
		In view of the equality of  MV-SDE \eqref{eqvariance} and the SDE
		\eqref{eqvariancef}, their unique strong solutions are identical so that we
		can use the explicit expression derived for \eqref{eqvariancef} to find an explicit solution for MV-SDE \eqref{eqvariance}. Employing Lemma \ref{lmsdebeta}, the solution to the linear SDE
		\eqref{eqvariance} is given by the following expression:
		\begin{equation}
			\bar{\mathcal{Y}}_t=\exp\left(- \int_{0}^{t}  \frac{v(u)}{T-u} \, \mathrm{d}u\right)\left[\int_{0}^{t}\exp\left( \int_{0}^{s}
			\frac{v(u)}{T-u} \, \mathrm{d}u\right)  \mathrm{d}W_s\right], \quad t \in [0, T).
		\end{equation}
		Using the structure of the coefficient function $v$, namely
		\begin{equation}
			v(t)=\dfrac{T-t}{2}\,\,\dfrac{g'(t)}{g(t)},\,t<T,
		\end{equation}
		the solution to the linear SDE \eqref{eqvariancef} can be written in the
		concise form
		\begin{align*}
			\bar{\mathcal{Y}}_t&=\exp\left(- \int_{0}^{t}  \frac{g'(u)}{2g(u)} \, \mathrm{d}u\right)\left[\int_{0}^{t}\exp\left( \int_{0}^{s}  \frac{g'(u)}{2g(u)} \, \mathrm{d}u\right)  \mathrm{d}W_s\right], \quad t \in [0, T)\\
			&=\dfrac{1}{\sqrt{g(t)}}\dint_0^t\sqrt{g(s)}\,\mathrm{d}W_s, \quad t \in [0, T).
		\end{align*}
		Recalling that the solution coincides with the one for MV-SDE
		\eqref{eqvariance} completes the proof.
	\end{proof}
	\begin{proposition}
		The process $\mathcal{Y}$ is a centred Gaussian process with covariance function given by:
		\begin{equation}
			\text{Cov}(\mathcal{Y}_s,\mathcal{Y}_t)=\dfrac{T-s}{2}\,\,\dfrac{g'(s)}{\sqrt{g(s)g(t)}}.\label{eqCovariance}
		\end{equation} 
		Moreover, 
		\begin{equation}
			\lim\limits_{t \rightarrow T}\text{Var}(\mathcal{Y}_t)=0.
		\end{equation}
	\end{proposition}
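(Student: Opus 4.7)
The plan is to exploit the explicit representation $\mathcal{Y}_t = \frac{1}{\sqrt{g(t)}}\int_0^t \sqrt{g(s)}\, \mathrm{d}W_s$ established in the preceding proposition, together with the identity $v(t)=\mathbb{E}[\mathcal{Y}_t^2]=\frac{T-t}{2}\frac{g'(t)}{g(t)}$ that was used to convert the Wiener integral into Bessel form. Since $\mathcal{Y}_t$ is a Wiener integral of a deterministic integrand, it is centred Gaussian, and for any finite collection $t_1,\dots,t_n$ the vector $(\mathcal{Y}_{t_i})_i$ is a deterministic linear combination of a jointly Gaussian family; hence $\mathcal{Y}$ is a centred Gaussian process. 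This essentially disposes of the first half of the statement.

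For the covariance, I would take $s\leq t<T$ and apply the It\^o isometry to the two deterministic integrands to get
\begin{equation*}
\text{Cov}(\mathcal{Y}_s,\mathcal{Y}_t)=\frac{1}{\sqrt{g(s)g(t)}}\int_0^s g(u)\,\mathrm{d}u.
\end{equation*}
Setting $s=t$ also yields $\mathbb{E}[\mathcal{Y}_s^2]=\frac{1}{g(s)}\int_0^s g(u)\,\mathrm{d}u$. Combining this with $v(s)=\frac{T-s}{2}\frac{g'(s)}{g(s)}$ (the substitution used in the previous proof) gives the key algebraic identity $\int_0^s g(u)\,\mathrm{d}u=\frac{T-s}{2}g'(s)$, which substituted into the above expression produces exactly the claimed formula \eqref{eqCovariance}.

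For the vanishing of the variance, specialising $s=t$ in \eqref{eqCovariance} gives $\text{Var}(\mathcal{Y}_t)=\frac{T-t}{2}\frac{g'(t)}{g(t)}$, so the limit reduces to an asymptotic analysis of $g$ and $g'$ at $t\to T$, i.e.\ of the Bessel functions $K_0,K_1,I_0,I_1$ at the argument $z(t)=2\sqrt{2}\sqrt{T-t}\to 0^+$. Differentiating $g$ with the identities $K_0'=-K_1$ and $I_0'=I_1$ gives a closed expression for $g'(t)$ in terms of $K_1(z(t))$ and $I_1(z(t))$. Using the standard small-argument asymptotics $K_0(z)\sim -\log(z/2)$, $K_1(z)\sim 1/z$, $I_0(z)\to 1$, $I_1(z)\sim z/2$, one sees that $g(t)$ diverges \emph{logarithmically} (driven by the $K_0$ term), while $g'(t)$ behaves like a constant multiple of $1/(T-t)$ (driven by $K_1$). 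The prefactor $T-t$ then cancels the singularity in $g'$ and the quotient is controlled by the reciprocal of the logarithm, yielding $\text{Var}(\mathcal{Y}_t)\sim -1/(2\log(2(T-t)))\to 0$.

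The main obstacle is the last step: one must be careful with the Bessel asymptotics to verify that the logarithmic growth of $g$ dominates the bounded $I_0$ contribution and cancels exactly against the blow-up of $g'$, leaving a quantity that still goes to zero. The Gaussian property and the covariance identity, by contrast, are an essentially mechanical combination of the It\^o isometry and the relation between $v$ and $g$ already recorded in the existence proof.
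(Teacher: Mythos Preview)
Your argument is correct. For the covariance you take a genuinely shorter route than the paper: the paper computes $\int_0^s g(u)\,\mathrm{d}u$ by explicitly integrating the Bessel functions $K_0$ and $I_0$ via the antiderivative identities (their equations \eqref{eqIntbesselK}--\eqref{eqIntbesselI}) and then recognises the result as $\frac{T-s}{2}g'(s)$. You instead read off $\int_0^s g(u)\,\mathrm{d}u = g(s)\,\mathbb{E}[\mathcal{Y}_s^2] = g(s)\,v(s) = \frac{T-s}{2}g'(s)$ directly from the two relations $v(s)=\mathbb{E}[\mathcal{Y}_s^2]$ and $v(s)=\frac{T-s}{2}\,g'(s)/g(s)$ already recorded in the preceding propositions, which bypasses the Bessel integration entirely. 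This is more economical and makes clearer why the covariance has the stated form. For the variance limit, your approach and the paper's are essentially the same---both reduce to the small-argument asymptotics of $K_0,K_1,I_0,I_1$; the paper lists the relevant series expansions and the four individual limits, while you package the same information as $g(t)\sim -\tfrac{1}{2}I_1(2\sqrt{2T})\log(2(T-t))$ and $(T-t)g'(t)\to \tfrac{1}{2}I_1(2\sqrt{2T})$, which is equivalent.
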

	\begin{proof}
		Due to \eqref{eqsolutionvariance}, it is clear that $\mathcal{Y}$ is a centred Gaussian process. The covariance function is given by:
		\begin{align*}
			&\text{Cov}(\mathcal{Y}_s,\mathcal{Y}_t)=\dfrac{1}{\sqrt{g(s)g(t)}}\dint_0^sg(u)\,\mathrm{d}u\\
			&=\dfrac{1}{\sqrt{g(s)g(t)}}\left[I_1(2\sqrt{2T}) \int_0^s K_0(2\sqrt{2 (T - u)}) \mathrm{d}u+ K_1(2 \sqrt{2T}) \int_0^s I_0(2 \sqrt{2 (T - u)})\mathrm{d}u\right].
		\end{align*} 
		We have
		\begin{equation}
			\int_0^s K_0\left(2 \sqrt{2 (T - u)}\right) \, \mathrm{d}u = \frac{\sqrt{T - s} \, K_1\left(2 \sqrt{2} \sqrt{T - s}\right)}{\sqrt{2}} - \frac{\sqrt{T} \, K_1\left(2 \sqrt{2} \sqrt{T}\right)}{\sqrt{2}}\label{eqIntbesselK}
		\end{equation}
		and
		\begin{equation}
			\int_0^s I_0\left(2 \sqrt{2 (T - u)}\right) \, \mathrm{d}u = -\frac{\sqrt{T - s} \, I_1\left(2 \sqrt{2} \sqrt{T - s}\right)}{\sqrt{2}} + \frac{\sqrt{T} \, I_1\left(2 \sqrt{2} \sqrt{T}\right)}{\sqrt{2}}.\label{eqIntbesselI}
		\end{equation}
		Hence,
		\begin{align*}
			\text{Cov}(y_s,Y_t)&=\sqrt{\dfrac{T-s}{2g(s)g(t)}}\left[I_1(2\sqrt{2T}) K_1\left(2 \sqrt{2} \sqrt{T - s}\right)- K_1(2 \sqrt{2T}) I_1\left(2 \sqrt{2} \sqrt{T - s}\right)\right]\\
			&=\dfrac{T-s}{2}\,\,\dfrac{g'(s)}{\sqrt{g(s)g(t)}}.
		\end{align*} 
		Let us now show that
		\begin{equation}
			\lim\limits_{t \rightarrow T}\text{Var}(\mathcal{Y}_t)=0.
		\end{equation}
		We have
		\begin{equation}
			\text{Var}(\mathcal{Y}_t)=v(t)=\dfrac{\sqrt{T-t}}{\sqrt{2}}\,\,\dfrac{ I_1(2 \sqrt{2} \sqrt{T}) K_1(2 \sqrt{2} \sqrt{T - t}) - K_1(2 \sqrt{2} \sqrt{T}) I_1(2 \sqrt{2} \sqrt{T - t})}{ I_1(2 \sqrt{2} \sqrt{T}) K_0(2 \sqrt{2} \sqrt{T - t}) + K_1(2 \sqrt{2} \sqrt{T}) I_0(2 \sqrt{2} \sqrt{T - t})}.
		\end{equation}
		Using the following series expansions
		\begin{multline}\label{eqseriesexpansionsbesselK_0}
			K_0(z) \propto\left(-\gamma+\frac{1}{4}(1-\gamma) z^2+\frac{1}{128}(3-2 \gamma) z^4+\ldots\right)\\-\log \left(\frac{z}{2}\right)\left(1+\frac{z^2}{4}+\frac{z^4}{64}+\ldots\right) / ;(z \rightarrow 0)
		\end{multline}
		\begin{multline}\label{eqseriesexpansionsbesselK_1}
			K_1(z) \propto \frac{1}{z}+\frac{z}{4}\left(2 \gamma-1+\frac{1}{8}\left(2 \gamma-\frac{5}{2}\right) z^2+\frac{1}{192}\left(2 \gamma-\frac{10}{3}\right) z^4+\ldots\right)\\
			+\frac{z}{2} \log \left(\frac{z}{2}\right)\left(1+\frac{z^2}{8}+\frac{z^4}{192}+\ldots\right) /(z \rightarrow 0)
		\end{multline}
		\begin{equation}\label{eqseriesexpansionsbesselI_nu}
			I_v(z) \propto \frac{1}{\Gamma(v+1)}\left(\frac{z}{2}\right)^v\left(1+\frac{z^2}{4(v+1)}+\frac{z^4}{32(v+1)(v+2)}+\ldots\right) / ;(z \rightarrow 0)
		\end{equation}
		we get the following limits:
		\begin{equation}
			\lim\limits_{t \rightarrow T} \sqrt{T - t} \,K_1(2 \sqrt{2} \sqrt{T - t})=\dfrac{\sqrt{2}}{4},\,\,\, \lim\limits_{t \rightarrow T} \sqrt{T - t} \,I_1(2 \sqrt{2} \sqrt{T - t})=0,\label{eqBessellimits}
		\end{equation}
		\begin{equation}
			\lim\limits_{t \rightarrow T} \,K_0(2 \sqrt{2} \sqrt{T - t})=+\infty,\,\,\, \lim\limits_{t \rightarrow T} \,I_0(2 \sqrt{2} \sqrt{T - t})=1.
		\end{equation}
		Thus,
		\begin{equation*}
			\lim\limits_{t \rightarrow T}\text{Var}(\mathcal{Y}_t)=0.
		\end{equation*}
		This completes the proof.
	\end{proof}
	From the explicit expression of the process $\mathcal{Y}$, we can deduce that for all $t<T$
	\begin{equation}
		\sqrt{g(t)}\,\mathcal{Y}_t=\dint_0^t\sqrt{g(s)}\,\mathrm{d}W_s, \,\,\,t<T,\label{eqgX}
	\end{equation}
	Hence, we have the following Corollary:
	\begin{corollary}\label{corindepincreVariance}
		The process $N=(N_t ,t<T)$ given by
		\begin{equation}
			N_t=\sqrt{g(t)}\,\mathcal{Y}_t,
		\end{equation}
		is with independent increments.
	\end{corollary}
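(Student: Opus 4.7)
The plan is to derive the claim directly from the representation \eqref{eqgX}, which expresses $N_t = \sqrt{g(t)}\,\mathcal{Y}_t$ as a Wiener integral with a deterministic integrand. Specifically, I would start by recalling that
\begin{equation*}
N_t = \int_0^t \sqrt{g(s)}\,\mathrm{d}W_s, \quad t<T,
\end{equation*}
so $N$ is the stochastic integral of a (non-random, continuous, non-negative) function against the driving Brownian motion $W$.

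Next, for any finite partition $0 \le t_0 < t_1 < \cdots < t_n < T$, I would write each increment as
\begin{equation*}
N_{t_{k+1}} - N_{t_k} = \int_{t_k}^{t_{k+1}} \sqrt{g(s)}\,\mathrm{d}W_s.
\end{equation*}
The vector $(N_{t_{k+1}}-N_{t_k})_{k=0}^{n-1}$ is jointly Gaussian (as a linear image under the Wiener isometry of the deterministic functions $\mathbf{1}_{(t_k,t_{k+1}]}\sqrt{g}$). By the It\^o isometry, for $k \neq j$ the covariance
\begin{equation*}
\mathrm{Cov}\big(N_{t_{k+1}}-N_{t_k},\,N_{t_{j+1}}-N_{t_j}\big) = \int_0^T \mathbf{1}_{(t_k,t_{k+1}]}(s)\mathbf{1}_{(t_j,t_{j+1}]}(s)\,g(s)\,\mathrm{d}s = 0
\end{equation*}
since the intervals are disjoint. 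Joint Gaussianity together with pairwise uncorrelatedness then yields mutual independence of the increments, establishing that $N$ has independent increments.

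The main point that requires a remark (rather than a real obstacle) is that $g$ is strictly positive on $[0,T)$, so that $\sqrt{g}$ is well-defined and continuous and the Wiener integral makes sense; positivity follows from the explicit Bessel-function expression for $g$ established just prior to the corollary. Beyond that, the argument is essentially a direct application of the elementary fact that Wiener integrals of deterministic functions over disjoint intervals are independent Gaussian random variables.
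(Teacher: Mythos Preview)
Your proposal is correct and follows exactly the same route as the paper: the corollary is stated immediately after \eqref{eqgX} with no further proof, since the representation $N_t=\int_0^t\sqrt{g(s)}\,\mathrm{d}W_s$ makes the independent-increments property immediate. You have simply written out the standard Wiener-integral argument that the paper leaves implicit.
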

	\begin{proposition}
		The process $\mathcal{Y}$ is a pinned process, i.e.,
		\begin{equation}
			\mathbb{P}\left(\lim\limits_{t \rightarrow T} \mathcal{Y}_t=0\right)=1.
		\end{equation}
	\end{proposition}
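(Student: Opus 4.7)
The plan is to imitate the argument used for $X^{(\alpha),T}$ in the previous section, transplanting it to the process $N_t=\sqrt{g(t)}\,\mathcal{Y}_t$, which is the natural analogue of $M^{(\alpha),T}$. From Corollary \ref{corindepincreVariance}, $N$ has independent increments, and from \eqref{eqgX} it admits the representation $N_t=\int_0^t\sqrt{g(s)}\,\mathrm{d}W_s$ with $N_0=0$. I would first fix an increasing sequence of times $t_k\uparrow T$ and write
\begin{equation*}
N_t=\sum_{k=0}^{n_t-1}\bigl(N_{t_{k+1}}-N_{t_k}\bigr),\qquad n_t\to\infty\text{ as }t\to T,
\end{equation*}
so that the almost sure convergence of $N_t$ as $t\to T$ reduces to the almost sure convergence of a series of independent centred Gaussian summands.

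The second step is to estimate the sum of variances by
\begin{equation*}
\sum_{k=0}^{\infty}\mathbb{E}\bigl[(N_{t_{k+1}}-N_{t_k})^2\bigr]\leq \int_0^T g(u)\,\mathrm{d}u,
\end{equation*}
and to verify that the right-hand side is finite. Here I would use the explicit antiderivatives \eqref{eqIntbesselK} and \eqref{eqIntbesselI} evaluated at $s=T$, combined with the boundary behaviour $\sqrt{T-s}\,K_1(2\sqrt{2}\sqrt{T-s})\to\sqrt{2}/4$ and $\sqrt{T-s}\,I_1(2\sqrt{2}\sqrt{T-s})\to 0$ obtained in \eqref{eqBessellimits}. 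This yields an explicit finite expression for $\int_0^T g(u)\,\mathrm{d}u$ in terms of $I_1(2\sqrt{2T})$ and $K_1(2\sqrt{2T})$. Kolmogorov's convergence theorem then guarantees that $\sum_{k=0}^{\infty}(N_{t_{k+1}}-N_{t_k})$ converges almost surely, so that $N_t$ has a finite almost sure limit as $t\to T$.

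Finally, I would observe from the series expansion \eqref{eqseriesexpansionsbesselK_0} that $K_0(2\sqrt{2}\sqrt{T-t})\to+\infty$ as $t\to T$, whereas $I_0(2\sqrt{2}\sqrt{T-t})\to 1$, so that $g(t)\to+\infty$. Combining this with the almost sure boundedness of $N_t$ and the identity $\mathcal{Y}_t=N_t/\sqrt{g(t)}$ yields $\mathbb{P}(\lim_{t\to T}\mathcal{Y}_t=0)=1$.

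The main obstacle is the verification that $\int_0^T g(u)\,\mathrm{d}u<\infty$: one must confirm that the logarithmic singularity of $K_0$ at the origin, which makes $g(t)\to\infty$ as $t\to T$, remains integrable near $t=T$. This is handled cleanly by the explicit primitive \eqref{eqIntbesselK} together with the limit $\sqrt{T-t}\,K_1(2\sqrt{2}\sqrt{T-t})\to\sqrt{2}/4$, which controls the only potentially singular boundary contribution. Everything else is a straightforward application of the structure already developed in the previous section.
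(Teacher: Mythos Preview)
Your proposal is correct and follows essentially the same strategy as the paper: both arguments reduce to showing $\int_0^T g(u)\,\mathrm{d}u<\infty$ via the explicit primitives \eqref{eqIntbesselK}--\eqref{eqIntbesselI} and the limits \eqref{eqBessellimits}, and then conclude from $g(t)\to\infty$ together with $\mathcal{Y}_t=N_t/\sqrt{g(t)}$. The only cosmetic difference is that the paper phrases the convergence of $N_t$ as $L^2$-boundedness of the continuous martingale $N$ (i.e.\ $\langle N\rangle_T=\int_0^T g(u)\,\mathrm{d}u=I_1(2\sqrt{2T})/4<\infty$) and invokes the martingale convergence theorem, whereas you discretise along a partition and apply Kolmogorov's series theorem as in the Section~2 proof; the martingale formulation is slightly cleaner since it directly yields the continuous-time limit without passing through a sequence.
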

	\begin{proof}
		From Corollary \ref{corindepincreVariance} we see that $(N_t, t<T)$ is a continuous martingale with bracket
		\begin{equation*}
			\langle N\rangle_t=\displaystyle\int_{0}^tg(s)\mathrm{d}s=I_1(2\sqrt{2T}) \int_0^t K_0(2\sqrt{2 (T - u)}) \mathrm{d}u+ K_1(2 \sqrt{2T}) \int_0^t I_0(2 \sqrt{2 (T - u)})\mathrm{d}u.
		\end{equation*}
		Using \eqref{eqIntbesselK} and \eqref{eqIntbesselI} we obtain
		\begin{equation*}
			\langle N\rangle_t=\sqrt{\dfrac{T-t}{2}}\left[I_1(2\sqrt{2T}) K_1\left(2 \sqrt{2} \sqrt{T - t}\right)- K_1(2 \sqrt{2T}) I_1\left(2 \sqrt{2} \sqrt{T - t}\right)\right].
		\end{equation*}
		Using again \eqref{eqseriesexpansionsbesselK_1} and \eqref{eqseriesexpansionsbesselI_nu}, we obtain 
	
		\begin{equation*}
			\langle N\rangle_T=\lim\limits_{t \rightarrow T}\langle N\rangle_t=\dfrac{I_1(2\sqrt{2T})}{4}<+\infty.
		\end{equation*}
		Thus, $N$ is a (true) martingale bounded in $L^2$. Furthermore, $N_t$ converges almost surely as $t$ goes to $T$. Consequently, using the fact that $\lim\limits_{t\rightarrow T}\sqrt{g(t)}=+\infty$ and 
		\begin{equation}
			\mathcal{Y}_t=\dfrac{N_t}{\sqrt{g(t)}}
		\end{equation}
		we obtain the desired result.
	\end{proof} 
	\subsection{The Nonlinear Case ($\alpha>0$)}
	\hspace{0,6cm}While the case $\alpha=1$ offers explicit solutions, the dynamics become significantly more intricate when $\alpha>0$. The nonlinear dependence of the drift on the second moment introduces challenges in establishing existence and uniqueness of solutions. In this subsection, we study the general MV-SDE:
	\begin{equation}
		\left\{\begin{array}{l}
			\mathrm{d} Y_t=-(\mathbb{E}[Y_t^2])^\alpha[Y_t/(T-t)] \mathrm{d} t+\mathrm{d} W_t,\,\, t<T\\
			Y_0=x\geq 0,
		\end{array}\right.\label{eqvariancegeneral}
	\end{equation}
	where $\alpha>0$. Before addressing the existence and uniqueness of \eqref{eqvariancegeneral}, we first consider the existence and uniqueness of the following ODE:
	\begin{equation}
		\left\{\begin{array}{l}
			f'(t)=-2[f^{\alpha+1}(t)/(T-t)]+1,\,t<T\\
			f(0)=x\geq 0
		\end{array}\right.\label{eqODEalpha}
	\end{equation}
	\begin{proposition}\label{propODEalpha}
		The ODE \eqref{eqODEalpha} 
		admits a bounded non-negative solution on $[0,T]$.
	\end{proposition}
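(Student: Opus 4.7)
The plan is to combine Peano local existence with a sharp a priori bound, and then extend the solution continuously to $t=T$ via an integrated form of the equation.

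First I would establish local existence on some maximal interval $[0,T_{\max})$, with $T_{\max}\leq T$. The right-hand side
\[ F(t,f)=-\frac{2f^{\alpha+1}}{T-t}+1 \]
is continuous on $[0,T)\times[0,\infty)$ and extends continuously to $[0,T)\times\mathbb{R}$ by replacing $f^{\alpha+1}$ with $|f|^{\alpha}f$, so Peano's theorem yields a continuous solution starting from $f(0)=x\geq 0$. Non-negativity then follows by a standard contradiction: if $f$ were negative somewhere, then at $t_0:=\inf\{t\geq 0:f(t)<0\}$ one would have $f(t_0)=0$ by continuity but $f'(t_0)=1>0$ from the ODE, contradicting the definition of $t_0$.

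Next, I would prove the key a priori upper bound $f(t)\leq M:=\max\bigl(x,\,T^{1/(\alpha+1)}\bigr)$ on $[0,T_{\max})$. Whenever $f(t)\geq T^{1/(\alpha+1)}$ one has $f^{\alpha+1}(t)\geq T\geq T-t$, hence
\[ f'(t)=-\frac{2f^{\alpha+1}(t)}{T-t}+1\leq -2+1=-1. \]
A first-upward-crossing argument (if $t^{*}$ were a first time at which $f$ strictly exceeded $M$, then $f'(t^{*})\geq 0$ would contradict the inequality above) yields $f\leq M$. This bound rules out blow-up, so in fact $T_{\max}=T$.

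To extend continuously up to $t=T$, I would integrate the ODE to obtain
\[ f(t)+\int_{0}^{t}\frac{2f^{\alpha+1}(s)}{T-s}\,\mathrm{d}s=x+t,\qquad t\in[0,T). \]
Since $f\geq 0$, the integral on the left is monotone non-decreasing in $t$ and bounded above by $x+T$, hence converges to some $I\in[0,x+T]$ as $t\to T$. Consequently $L:=\lim_{t\to T}f(t)=x+T-I\geq 0$ exists. If $L>0$ were strictly positive, then $f(s)\geq L/2$ in a left neighborhood of $T$, which would force $\int_{0}^{T}2f^{\alpha+1}(s)/(T-s)\,\mathrm{d}s=+\infty$ by logarithmic divergence, contradicting the uniform bound just established. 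Hence $L=0$, and $f$ extends to a continuous, bounded, non-negative function on $[0,T]$ with $f(T)=0$.

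The main obstacle is the a priori upper bound: the threshold $T^{1/(\alpha+1)}$ has to be chosen so that the singular drift strictly dominates the unit forcing uniformly in $t\in[0,T)$, and one must then verify that this threshold is consistent with the initial condition via the $\max$ with $x$. Once this bound is in place, the other ingredients—local existence via Peano, non-negativity via the first-crossing argument, and the limit as $t\to T$ via the integrated equation—are all routine.
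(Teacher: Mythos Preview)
Your argument is correct. The structure—Peano local existence, non-negativity via a first-crossing argument, an a priori upper bound to reach $T_{\max}=T$, then convergence at $T$ via the integrated equation—is sound, and each step checks out as written.

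The route differs from the paper's in one place: the a priori upper bound. The paper observes directly that $(f(t)+T-t)'=f'(t)-1=-2f^{\alpha+1}(t)/(T-t)\leq 0$, so the single monotone, non-negative function $t\mapsto f(t)+T-t$ simultaneously delivers the bound $f\leq x+T$ (ruling out blow-up) and the existence of $\lim_{t\to T}f(t)$. You instead prove the sharper bound $f\leq\max(x,T^{1/(\alpha+1)})$ by a threshold-crossing argument and then obtain the limit from the integrated identity $f(t)+\int_0^t 2f^{\alpha+1}(s)/(T-s)\,\mathrm{d}s=x+t$; note that differentiating this identity recovers exactly the paper's monotonicity, so your Step~4 and the paper's argument are two faces of the same observation. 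What your detour buys is (i) a tighter bound and (ii) the extra conclusion $f(T)=0$, which the paper postpones to a separate proposition (Proposition~\ref{propLimitoff}); what the paper's route buys is economy, since the one inequality $f'\leq 1$ does all the work needed for the present statement without the crossing analysis.
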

	\begin{proof} 
	
		The existence of a local solution to the ODE \eqref{eqODEalpha} is evident. To establish that this solution is global, it is sufficient to demonstrate that the limit of $f$ exists as $t$ approaches $T$. First, we observe that the function $f$ cannot reach $0$ at any point before $T$, except at $t = 0$ if $x = 0$. Indeed, suppose $t_0$ is the first time at which $f$ attains $0$, i.e., $f(t_0) = 0$. From the ODE \eqref{eqODEalpha}, we find that the derivative at $t_0$ is $f'(t_0) = 1$, which implies that $f$ is increasing at $t_0$. However, for $f(t_0) = 0$, the function would necessarily have had to decrease prior to $t_0$, contradicting the condition $f'(t_0) > 0$.	Let us now prove that the limit of $f$ exists as $t$ approaches $T$. For all $0 \leq t < T$, we have  
		\begin{equation}
			f'(t) - 1 = -2 \frac{f^{\alpha+1}(t)}{T - t} \leq 0.
		\end{equation}  
		Thus,  
		\begin{equation}
			\big( f(t) + T - t \big)' = f'(t) - 1 \leq 0.
		\end{equation}  
		This shows that the function $t \mapsto f(t) + T - t$ is decreasing and non-negative. Therefore, the limit of $f$ exists as $t$ approaches $T$. Furthermore, again since $t \mapsto f(t) + T - t$ is decreasing, for all $t\in[0,T]$, $0\leq f(t)\leq x+T$.
	\end{proof}
	\begin{proposition}
		The MV-SDE \eqref{eqvariancegeneral} has a unique strong solution.
	\end{proposition}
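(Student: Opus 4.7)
The proof will follow the decoupling strategy employed for the case $\alpha=1$: solve a deterministic ODE for the profile of the second moment, freeze that profile in the drift, and exploit uniqueness of the resulting linear SDE. First, invoke Proposition \ref{propODEalpha} to obtain a bounded non-negative solution $f$ on $[0,T]$ of the ODE \eqref{eqODEalpha}, and consider the auxiliary linear SDE
\begin{equation*}
\mathrm{d} \bar Y_t = -f^\alpha(t)\,\frac{\bar Y_t}{T-t}\,\mathrm{d} t + \mathrm{d} W_t, \quad \bar Y_0 = x,
\end{equation*}
whose coefficient $f^\alpha$ is continuous and bounded on $[0,T)$. Lemma \ref{lmsdebeta} then supplies a unique strong solution $\bar Y$ in Gaussian closed form.

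Next one shows that the second moment of $\bar Y$ coincides with $f$. Applying It\^o's formula to $\bar Y_t^2$ gives
\begin{equation*}
\mathrm{d} \bar Y_t^2 = -2 f^\alpha(t)\,\frac{\bar Y_t^2}{T-t}\,\mathrm{d} t + 2 \bar Y_t\,\mathrm{d} W_t + \mathrm{d} t,
\end{equation*}
and, since the Gaussian form of $\bar Y$ together with the boundedness of $f^\alpha$ guarantees that $\int_0^\cdot \bar Y_s\,\mathrm{d} W_s$ is a true martingale on every $[0,t]$ with $t<T$, taking expectations yields a linear first-order ODE for $m(t):=\mathbb{E}[\bar Y_t^2]$. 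Comparing with the ODE \eqref{eqODEalpha} satisfied by $f$, the difference $m-f$ solves a linear homogeneous ODE with continuous coefficient and vanishes initially, hence $m\equiv f$ on $[0,T)$. Consequently $\bar Y$ is a strong solution of the MV-SDE \eqref{eqvariancegeneral}, establishing existence.

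For uniqueness, let $Y$ be any strong solution of \eqref{eqvariancegeneral}. The same It\^o computation shows that $\tilde m(t) := \mathbb{E}[Y_t^2]$ is a non-negative bounded solution of
\begin{equation*}
\tilde m'(t) = -2\,\frac{\tilde m^{\alpha+1}(t)}{T-t} + 1, \quad t<T,
\end{equation*}
which is exactly the defining ODE of $f$. The uniform bound $\tilde m \leq x+T$, obtained as in Proposition \ref{propODEalpha}, combined with the local Lipschitz property of $z \mapsto z^{\alpha+1}$ on bounded intervals, forces $\tilde m = f$ by standard ODE uniqueness. Thus $Y$ solves the same linear SDE as $\bar Y$, and the strong uniqueness delivered by Lemma \ref{lmsdebeta} gives $Y=\bar Y$.

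The main obstacle I anticipate is the rigorous justification that $\int_0^\cdot \bar Y_s\,\mathrm{d} W_s$, and the analogous stochastic integral for an arbitrary candidate solution $Y$, are \emph{true} martingales on $[0,T)$, so that the moment ODEs above may be obtained simply by taking expectations. For $\bar Y$ this follows from the explicit Gaussian expression and the bound $f\leq x+T$; for a generic $Y$ one needs localisation by the exit times $\tau_n=\inf\{t:|Y_t|>n\}$ and a Gr\"onwall-type argument that propagates $L^2$ bounds up to each time $t<T$. Once this integrability issue is settled, the decoupling principle reduces the MV-SDE to the already handled linear case and the remainder of the argument is routine.
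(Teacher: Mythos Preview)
Your proposal is correct and follows essentially the same decoupling strategy as the paper: freeze the second-moment profile via the ODE of Proposition~\ref{propODEalpha}, solve the resulting linear SDE through Lemma~\ref{lmsdebeta}, and identify $\mathbb{E}[\bar Y_t^2]$ with $f$ by the linear homogeneous ODE for the difference. Your treatment of uniqueness is in fact more explicit than the paper's, which simply asserts that the frozen SDE and the MV-SDE ``represent the same SDE'' without spelling out that any candidate solution must have second moment equal to $f$; your appeal to local Lipschitz uniqueness for the ODE \eqref{eqODEalpha} closes that gap cleanly.
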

	\begin{proof}
		We consider the following SDE:
		\begin{equation}
			\left\{\begin{array}{l}
				\mathrm{d} \hat{Y}_t=-(f(t))^{\alpha}[\hat{Y}_t/(T-t)] \mathrm{d} t+\mathrm{d} W_t,\,\, t<T\\
				\hat{Y}_0=0
			\end{array}\right.\label{eqvariancegeneralf}
		\end{equation}			
		where $f$ is the solution to the ODE \eqref{eqODEalpha}. It follows from Lemma \ref{lmsdebeta} that the SDE \eqref{eqvariancef} admits a unique strong solution. Moreover, for all $t<T$, we have
		\begin{equation}
			\mathbb{E}[\hat{Y}_t^2]=\int_{0}^{t}\exp\left( -2\int_{s}^{t}  \frac{(f(u))^{\alpha}}{T-u} \, \mathrm{d}u\right)  \mathrm{d}s.\label{eqsecondmomentofhatY}
		\end{equation}
		Applying Itô's formula, we obtain  
		\begin{equation}
			\hat{Y}_t^2=-2\int_{0}^{t}(f(s))^{\alpha}\dfrac{\hat{Y}^2_s}{T-s}\mathrm{d}s+2\int_{0}^{t}\hat{Y}_s\mathrm{d}W_s+t.\label{eqitoYalpha}
		\end{equation}
		It follows from \eqref{eqsecondmomentofhatY} that the process $(\int_{0}^{t}\hat{Y}_s\mathrm{d}W_s, 0\leq t<T)$ is a true martingale. As a consequence, it follows from \eqref{eqitoYalpha} that  
		\begin{equation}
			\mathrm{d}\mathbb{E}[\hat{Y}_t^2]=-2(f(t))^{\alpha}\dfrac{\mathbb{E}[\hat{Y}^2_t]}{T-t}\mathrm{d}t+\mathrm{d}t.\label{eqEY_t^2'alpha}
		\end{equation}
		Using the fact that $f$ is the solution to the ODE \eqref{eqODEalpha} and \eqref{eqEY_t^2'alpha}, we obtain  
		\begin{align*}
			\mathrm{d}(f(t)-\mathbb{E}[\hat{Y}_t^2])&=-2\dfrac{(f(t))^{\alpha}}{T-t}(f(t)-\mathbb{E}[\hat{Y}_t^2])\mathrm{d}t
		\end{align*}  
		Hence,  
		\begin{equation}
			f(t)=\mathbb{E}[\hat{Y}_t^2],\,t<T.\label{eqfEY_t^2}
		\end{equation} 
		Hence, \eqref{eqvariancegeneralf} and \eqref{eqvariancegeneral} represent the same SDE. Thus, the MV-SDE \eqref{eqvariancegeneral} has a unique strong solution.  
	\end{proof}
	\begin{proposition}\label{propLimitoff}
		We have 
		\begin{equation}
			\lim\limits_{t \rightarrow T}\mathbb{E}[Y^2_t]=0.\label{eqlimitEX_t^2}
		\end{equation}
	\end{proposition}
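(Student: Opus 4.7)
The plan is to work directly with the ODE characterization of the second moment obtained in the previous proposition, rather than analysing the SDE \eqref{eqvariancegeneral} itself. By \eqref{eqfEY_t^2} we have $\mathbb{E}[Y_t^2] = f(t)$, where $f$ is the unique solution to the ODE \eqref{eqODEalpha}. Thus the claim reduces to showing $\lim_{t \to T} f(t) = 0$.

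First I would use Proposition \ref{propODEalpha} to guarantee that the limit $L := \lim_{t \to T} f(t)$ exists in $[0, x+T]$: indeed it was shown there that $t \mapsto f(t) + T - t$ is decreasing and non-negative on $[0,T)$, so it has a finite non-negative limit as $t \to T$, which forces $f(t)$ to converge to some $L \geq 0$. The task is then to rule out $L > 0$.

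I would argue by contradiction. Suppose $L > 0$. Then there exists $t_0 \in [0,T)$ such that $f(t) \geq L/2$ for all $t \in [t_0,T)$, and consequently $f^{\alpha+1}(t) \geq (L/2)^{\alpha+1}$ on $[t_0, T)$. Substituting this lower bound into the ODE \eqref{eqODEalpha} yields
\begin{equation*}
f'(t) \;=\; 1 - 2\,\frac{f^{\alpha+1}(t)}{T-t} \;\leq\; 1 - \frac{2(L/2)^{\alpha+1}}{T-t}, \qquad t \in [t_0, T).
\end{equation*}
Integrating this inequality from $t_0$ to $t$ gives
\begin{equation*}
f(t) \;\leq\; f(t_0) + (t - t_0) + 2(L/2)^{\alpha+1}\,\log\!\left(\frac{T-t}{T-t_0}\right).
\end{equation*}
As $t \to T$ the logarithmic term diverges to $-\infty$ while the other terms stay bounded, so the right-hand side tends to $-\infty$. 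This contradicts $f(t) \geq 0$, and so $L = 0$, which is the desired conclusion.

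There is no real obstacle here: the work was done in proving Proposition \ref{propODEalpha} (existence of the limit and non-negativity) and in identifying $\mathbb{E}[Y_t^2]$ with $f(t)$. The only delicate point is that the singular factor $1/(T-t)$ in the ODE is precisely what forces $f$ to be driven down to $0$: any positive lower bound on $f$ would make the $-2 f^{\alpha+1}/(T-t)$ term non-integrable near $T$, and this observation drives the contradiction.
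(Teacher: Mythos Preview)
Your proof is correct and follows essentially the same approach as the paper: both reduce to the ODE via $\mathbb{E}[Y_t^2]=f(t)$, invoke Proposition~\ref{propODEalpha} for existence of the limit, assume $L>0$ for contradiction, use a lower bound $f\geq L/2$ near $T$, and integrate the ODE to produce a logarithmically divergent upper bound forcing $f(t)\to -\infty$. Your version is slightly more economical in that you only use the one-sided bound $f\geq L/2$, whereas the paper also tracks the upper bound $f<3L/2$ (which is not actually needed for the contradiction).
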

	\begin{proof}
		For the proof of \eqref{eqlimitEX_t^2}, from \eqref{eqfEY_t^2}, we have
		\begin{equation}
			\lim\limits_{t \rightarrow T}\mathbb{E}[Y^2_t]=\lim\limits_{t \rightarrow T}f(t).
		\end{equation}	
		It is proved in Proposition \ref{propODEalpha} that the limit of $f$ exists as $t$ approaches $T$. We only need to show that $\lim\limits_{t \rightarrow T}f(t)=0$. To do so, let assume that $\lim\limits_{t \rightarrow T}f(t)=L>0$. Hence, for all $\varepsilon>0$, there exists  $\delta>0$ such that if $\vert t-T\vert< \delta$ we have 
		\begin{equation}
			\vert f(t)-L\vert <\varepsilon.
		\end{equation}
		For $\varepsilon=\frac{L}{2}$, there exists $\delta>0$ such that for all $T-\delta<t<T$, we have
		\begin{equation}
			\frac{L}{2}<f(t)<\frac{3L}{2}
		\end{equation}
		Thus, we have
		\begin{align}
			f(t)-f(T-\delta)&=\dint_{T-\delta}^{t}f'(s)\mathrm{d}s\\
			&=-2\dint_{T-\delta}^{t}\dfrac{f^{\alpha+1}(s)}{T-s}\mathrm{d}s+t-T+\delta.
		\end{align}
		Hence,
		\begin{equation*}
			2\left(\dfrac{3L}{2}\right)^{\alpha+1}\log\left(\dfrac{T-t}{\delta}\right)+t-T+\delta<f(t)-f(T-\delta)<2\left(\dfrac{L}{2}\right)^{\alpha+1}\log\left(\dfrac{T-t}{\delta}\right)+t-T+\delta.
		\end{equation*}
		This implies that $\lim\limits_{t \rightarrow T}f(t)=-\infty$. This contradicts the fact that  $\lim\limits_{t \rightarrow T}f(t)=L>0$. Thus, $L=0$.
	\end{proof}
In the next proposition, we investigate the pinned property of the process $Y$. However, since $f$, the solution to the ODE \eqref{eqODEalpha}, is not explicitly available, we cannot directly prove the pinned property as we did in the case $\alpha=1$. To address this difficulty, we rely on a result from \cite{HR}, which is presented in the Appendix, where the pinned property for processes of a similar form is studied.
	\begin{proposition}
		The solution to the MV-SDE \eqref{eqvariancegeneral} is a pinned process.
	\end{proposition}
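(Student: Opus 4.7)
The plan is to reduce the pinning of $Y$ to the pinning of a linear time-inhomogeneous Gauss--Markov SDE with a deterministic coefficient, and then to invoke the appendix lemma adapted from \cite{HR}, exactly as the authors signal before the statement. Concretely, write $\beta(t) := (f(t))^{\alpha}$, where $f$ is the bounded non-negative solution of the ODE \eqref{eqODEalpha} from Proposition \ref{propODEalpha}. By the identification $f(t)=\mathbb{E}[Y_t^2]$ obtained in \eqref{eqfEY_t^2}, the process $Y$ satisfies (path-wise, for the same Brownian motion $W$) the linear SDE \eqref{eqvariancegeneralf}, namely $\mathrm{d}Y_t=-\beta(t)\,Y_t/(T-t)\,\mathrm{d}t+\mathrm{d}W_t$, so the desired pinning reduces to pinning of a classical bridge-type process with deterministic, continuous, non-negative coefficient $\beta$ on $[0,T)$.

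The steps I would carry out, in order, are: (i) quote the equivalence between \eqref{eqvariancegeneral} and \eqref{eqvariancegeneralf} provided by \eqref{eqfEY_t^2}, so that it is enough to establish $\hat Y_t\to 0$ a.s.; (ii) record the basic regularity properties of $\beta$: continuity on $[0,T)$ from continuity of $f$, non-negativity from Proposition \ref{propODEalpha}, and the vanishing $\beta(t)\to 0$ as $t\to T$ supplied by Proposition \ref{propLimitoff}; (iii) check the hypotheses of the appendix lemma for these $\beta$ and apply it to conclude $Y_t\to 0$ almost surely as $t\to T$.

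The main obstacle is step (iii): the appendix lemma will typically require a quantitative statement near $T$ (divergence of $\int_0^T \beta(u)/(T-u)\,\mathrm{d}u$ together with finiteness of $\int_0^T \exp\bigl(2\int_0^s \beta(u)/(T-u)\,\mathrm{d}u\bigr)\,\mathrm{d}s$, or an equivalent criterion), and only the qualitative decay $f(T^-)=0$ is readily available. To match these hypotheses, I would extract the leading order of $f$ near $T$ directly from the ODE \eqref{eqODEalpha}: comparing $f$ with the ansatz $(2\alpha\log(1/(T-t)))^{-1/\alpha}$ via upper and lower differential inequalities obtained by freezing the sub-leading term, one obtains $\beta(t)\sim c/\log(1/(T-t))$ as $t\to T$, which is exactly slow enough to force the required integral to diverge and fast enough to keep the exponential integrable. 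Once these quantitative bounds on $\beta$ are in hand, the appendix lemma applies directly and yields $\mathbb{P}(\lim_{t\to T}Y_t=0)=1$.
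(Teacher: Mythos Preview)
Your reduction to the linear SDE \eqref{eqvariancegeneralf} with $\beta(t)=(f(t))^\alpha$ and the plan to invoke Lemma~\ref{lmpinnedproperty} is exactly the paper's route; the only substantive task is to verify that lemma's hypotheses for $h(t)=\beta(t)/(T-t)$ and $\mathfrak{s}\equiv 1$. Note that the lemma asks only for (i) $h$ bounded below with $\int_0^T h(t)\,\mathrm{d}t=+\infty$, and (ii) $\int_0^T\mathfrak{s}^2<\infty$. Since $h\ge 0$ and $\mathfrak{s}\equiv 1$, condition (ii) and the boundedness-below part of (i) are automatic; the ``exponential-integrable'' condition you anticipate is not part of the lemma and is unnecessary here.

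For the one genuine point, the divergence $\int_0^T f(s)^\alpha/(T-s)\,\mathrm{d}s=+\infty$, your asymptotic $f(t)\sim(2\alpha\log(1/(T-t)))^{-1/\alpha}$ is correct, and the lower half of it (all that is needed) follows from the differential inequality $f'\ge -2f^{\alpha+1}/(T-t)$, yielding $f(t)^\alpha\ge\bigl(2\alpha\log\frac{T-\varepsilon}{T-t}+f(\varepsilon)^{-\alpha}\bigr)^{-1}$ and hence logarithmic divergence of the integral. The paper, however, bypasses asymptotics entirely with a one-line trick: the ODE gives $f^\alpha/(T-s)=(1-f')/(2f)$, and dropping the nonnegative term $1/(2f)$ leaves
\[
\int_\varepsilon^t \frac{f(s)^\alpha}{T-s}\,\mathrm{d}s\ \ge\ -\tfrac12\int_\varepsilon^t\frac{f'(s)}{f(s)}\,\mathrm{d}s\ =\ -\tfrac12\log\!\left(\frac{f(t)}{f(\varepsilon)}\right)\to+\infty,
\]
since $f(t)\to 0$ by Proposition~\ref{propLimitoff}. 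Your route gives quantitative decay information on $f$ near $T$; the paper's is shorter and uses only the qualitative fact $f(T^-)=0$.
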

	\begin{proof}
		We aim to demonstrate that
		\begin{equation*}
			\mathbb{P}\left(\lim\limits_{t \rightarrow T} Y_t=0\right)=1.
		\end{equation*}	
		To achieve this, due to Lemma \ref{lmpinnedproperty}, it suffices to show that
		\begin{equation*}
			\lim_{t \rightarrow T} \displaystyle\int_0^t  \dfrac{(f(s))^{\alpha}}{T-s} \mathrm{d}s= +\infty.
		\end{equation*}
		where $f$ is the solution to the ODE \eqref{eqODEalpha}. Let $0<\varepsilon<T$, for any $t\in (\varepsilon, T)$, it follows from \eqref{eqODEalpha} that
		\begin{align*}
			\displaystyle\int_0^t  \dfrac{(f(s))^{\alpha}}{T-s} \mathrm{d}s&\geq \displaystyle\int_{\varepsilon}^t  \dfrac{(f(s))^{\alpha}}{T-s} \mathrm{d}s=\displaystyle\int_{\varepsilon}^t  \dfrac{1-f'(s)}{2f(s)} \mathrm{d}s\\
			&\geq -\dfrac{1}{2}\displaystyle\int_{\varepsilon}^t  \dfrac{f'(s)}{f(s)} \mathrm{d}s\\
			&=-\dfrac{1}{2}\log\left(\dfrac{f(t)}{f(\varepsilon)}\right).
		\end{align*}
		Thus, applying Proposition \ref{propLimitoff}, we obtain the desired result.
	\end{proof}
	\section{McKean–Vlasov SDEs with Expectation-Dependent Coefficients}
	\hspace{0,6cm}In this section, we generalize the results of the previous sections by considering McKean–Vlasov stochastic differential equations (SDEs) where the drift and the volatility depend on the time and the expectation of a general function of the process. Specifically, we study SDEs of the form: 
	\begin{equation}
		\left\{\begin{array}{l}
			\mathrm{d} \xi_t=-\mu(t,\mathbb{E}[\varphi_1(\xi_t)])[\xi_t/(T-t)] \mathrm{d} t+\sigma(t,\mathbb{E}[\varphi_2(\xi_t)]) \mathrm{d} W_t,, \quad t <T \\
			\xi_0=0
		\end{array}\right.\label{eqMV-SDEgeneral}
	\end{equation}
	where $\mu$ and $\sigma$ are deterministic functions that may depend on time $t$ and the expectation of given functions $\varphi_1$ and $\varphi_2$ of the process
	
	McKean–Vlasov equations are fundamental in describing systems where the dynamics of individual components depend on the collective behavior of the system, often represented by aggregate statistics such as the mean or variance of the population. In previous sections, we focused on specific cases. Section 2 dealt with SDEs where the drift term explicitly depended on the expectation raised to the power $\alpha$. This captures systems where the interaction strength is governed by power-law dependencies of the mean. Section 3 extended this to systems where the drift term depended on the second moment raised to the power $\alpha$, providing insights into models with higher-order moment dependencies. While these cases provide valuable insights into the behavior of systems with power-weighted expectation terms, they are limited to specific functional dependencies in the drift term and assume a constant diffusion coefficient.

In this section, we generalize the framework to consider both drift and diffusion coefficients as functions of the time $t$ and the expectation of general functions $\varphi_1$ and $\varphi_2$ of the process. This generalization allows us to model interactions driven by non-linear transformations of the process, beyond power laws. Beyond developing mathematics introducing additionally time and mean-field in all coefficient functions, i.e. $\mu(t, .)$ and $\sigma(t,.)$, reveals greater flexibility in modelling evolving systems influenced by external or temporal factors. The main result of this section establishes the existence and uniqueness of solutions to the above SDE under appropriate conditions on $\varphi$. Additionally, we prove that the second moment of the process satisfies:
	\begin{equation}
		\mathbb{E}[\xi_t^2]\mapsto 0 \text{   as  } t\mapsto T.
	\end{equation}
	This result highlights the dissipative nature of the drift term, which drives the process toward stabilization over time. Moreover, we show that the solution to the MV-SDE \eqref{eqMV-SDEgeneral} satisfies the pinned property.
	\begin{hy}
		Suppose that:
		\begin{enumerate}
			\item [(i)] the function $\mu:[0,T]\times\mathbb{R}:\longrightarrow \mathbb{R}$ is continuous and positive.
			\item [(ii)] the function $\sigma:[0,T]\times\mathbb{R}:\longrightarrow \mathbb{R}$ is continuous on $[0,T)\times\mathbb{R}$ and does not vanish, furthermore, $$\sigma^2(t,x)\leq h(t) \text{ and } \int_0^Th(t)\mathrm{d}t<+\infty$$
			\item [(iii)] the functions $\varphi_1$ and $\varphi_2$ are measurable functions such that:

			\begin{center}
				the functions $t\mapsto \mathbb{E}[\varphi_j(W_t)]$, $j=1, 2$, are continuous on $\mathbb{R}_+$.	
			\end{center}
		\end{enumerate}
	\end{hy}
	The condition (iii) is natural but implicit. However, to ensure this condition holds, various explicit conditions on \( \varphi_j \), \( j = 1, 2 \), can be found in the literature. For example: \begin{enumerate} \item If \( \varphi_j \), \( j = 1, 2 \), are bounded and continuous, then the continuity of the functions \( t \mapsto \mathbb{E}[\varphi_j(W_t)] \), \( j = 1, 2 \), follows immediately from the weak convergence of probability measures. \item If \( \varphi_j \), \( j = 1, 2 \), are bounded, integrable on \( \mathbb{R} \), and continuous at \( 0 \), then the continuity of the functions \( t \mapsto \mathbb{E}[\varphi_j(W_t)] \), \( j = 1, 2 \), follows immediately from Lebesgue's dominated convergence theorem. \item If \( \varphi_j \), \( j = 1, 2 \), satisfy a growth condition (e.g., polynomial growth) and are continuous at \( 0 \), then the continuity of the functions \( t \mapsto \mathbb{E}[\varphi_j(W_t)] \), \( j = 1, 2 \), also follows from Lebesgue's dominated convergence theorem. \end{enumerate}
	
	Now we are in position to investigate the existence and uniqueness of the MV-SDE \eqref{eqMV-SDEgeneral}. But before, we first investigate the existence of the solution of the following ODE:
	\begin{equation}
		\left\{\begin{array}{l}
			\eta'(t)=-2\mu(t,\phi_1(\eta(t)))[\eta(t)/(T-t)]+\sigma^2(t,\phi_2(\eta(t))),\,t<T\\
			\eta(0)=x\geq 0
		\end{array}\right.\label{eqODEgeneral}
	\end{equation}
	where,
	\begin{equation}
		\phi_j(r)=\mathbb{E}[\varphi_j(W_r)]=\displaystyle\int_{\mathbb{R}}\varphi_j(y)\dfrac{1}{\sqrt{2\pi r}}\exp\left(-\dfrac{y^2}{2r}\right)\mathrm{d}y,\,\,r>0,\,\,j=1,\,2.
	\end{equation}
	\begin{proposition}\label{propODEgeneral}
		The ODE \eqref{eqODEgeneral} admits a solution on $[0,T]$.
	\end{proposition}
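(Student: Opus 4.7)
The plan is to invoke Peano's existence theorem for a local solution, extend it to all of $[0,T)$ by establishing non-negativity and a uniform bound, and finally pass to the limit as $t \to T^-$ via a monotone-plus-integrable decomposition to obtain a continuous extension to the closed interval $[0,T]$.

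First I would verify that the right-hand side $F(t,y) := -2\mu(t,\phi_1(y))\,y/(T-t) + \sigma^2(t,\phi_2(y))$ is continuous on $[0,T)\times\mathbb{R}_+$: this follows from the continuity of $\mu$ and $\sigma$ (Assumption (i)--(ii)) combined with the continuity of $\phi_1,\phi_2$ on $\mathbb{R}_+$ granted by Assumption (iii). Peano's theorem then yields a local solution on some $[0,\tau)$ with $\tau \leq T$. Non-negativity on $[0,\tau)$ follows by the same first-hitting argument used in Proposition \ref{propODEalpha}: if $t_0 \in (0,\tau)$ were the first time at which $\eta(t_0)=0$, then $\eta'(t_0) = \sigma^2(t_0,\phi_2(0)) > 0$ since $\sigma$ does not vanish, contradicting the fact that $\eta$ must approach $0$ from above. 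Using $\eta \geq 0$, $\mu \geq 0$, and the envelope $\sigma^2 \leq h$ from Assumption (ii), the ODE gives $\eta'(t) \leq h(t)$, whence
\begin{equation*}
0 \leq \eta(t) \leq x + \int_0^T h(s)\,\mathrm{d}s < \infty.
\end{equation*}
The standard continuation principle then extends the local solution to all of $[0,T)$.

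To produce a value at $T$, I would introduce the auxiliary function
\begin{equation*}
H(t) := \eta(t) - \int_0^t \sigma^2(s,\phi_2(\eta(s)))\,\mathrm{d}s,
\end{equation*}
which satisfies $H'(t) = -2\mu(t,\phi_1(\eta(t)))\,\eta(t)/(T-t) \leq 0$. Thus $H$ is non-increasing; since $\eta \geq 0$ and the integral is dominated by $\int_0^T h < \infty$, $H$ is bounded below, so $\lim_{t\to T^-} H(t)$ exists and is finite. The integral in the definition of $H$ also converges as $t\to T^-$ by dominated convergence with integrable envelope $h$. Adding these two limits produces a finite $\eta(T) := \lim_{t\to T^-} \eta(t) \in [0,\infty)$, providing the desired continuous extension to $[0,T]$.

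The main obstacle is the time-singularity of the drift at $t=T$, which prevents any direct appeal to classical global-existence results on the closed interval; the monotone-plus-integrable decomposition of $\eta$ is the key device for controlling the behaviour near $T$ without requiring explicit information on $f$ (in contrast to the argument used in Proposition \ref{propLimitoff}, which relied on the more explicit structure of the power-law case).
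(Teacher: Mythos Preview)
Your proposal is correct and follows essentially the same route as the paper: Peano-type local existence, the identical first-hitting argument for non-negativity, and a monotone-plus-integrable decomposition to secure the limit at $T$. The only cosmetic difference is that the paper works with $t\mapsto \eta(t)+\int_t^T\sigma^2(s,\phi_2(\eta(s)))\,\mathrm{d}s$ whereas you use $H(t)=\eta(t)-\int_0^t\sigma^2(s,\phi_2(\eta(s)))\,\mathrm{d}s$; these differ by a constant, and your formulation has the minor advantage of not referencing $\eta$ on $(t,T)$ before global existence is established. Your explicit use of the envelope $h$ to obtain a uniform a priori bound and invoke the continuation principle is a step the paper leaves implicit.
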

	\begin{proof}
		Since the functions $r \mapsto \phi_j(r)$, $j=1, 2$, are continuous on $\mathbb{R}_+$, the existence of a local solution to the ODE \eqref{eqODEgeneral} is straightforward. To prove that the solution is global, it suffices to show that $\eta$ is non-negative and that the limit of $\eta$ exists as $t \to T$. First, we observe that the function $\eta$ cannot reach $0$ at any point before $T$, except at $t = 0$ if $x = 0$. Indeed, suppose $t_0$ is the first time at which $\eta$ attains $0$, i.e., $\eta(t_0) = 0$. From the ODE \eqref{eqODEgeneral}, we find that the derivative at $t_0$ is 
		$$\eta'(t_0) = \sigma^2(t_0,\varphi_2(0))>0,$$ 
		which implies that $\eta$ is increasing at $t_0$. However, for $\eta(t_0) = 0$, the function would necessarily have had to decrease prior to $t_0$, contradicting the condition $\eta'(t_0) > 0$. Let us show that the limit of $\eta$ exists as $t \to T$, note that $\mu$ is non-negative, which implies that for all $0\leq t<T$
		\begin{equation}
			\eta'(t)-\sigma^2(t,\phi_2(\eta(t)))\leq 0,
		\end{equation}
		and hence, the function 
		\begin{equation}
			t\mapsto \eta(t)+\int_t^T\sigma^2(s,\phi_2(\eta(s)))\mathrm{d}s,
		\end{equation}
		is non-decreasing on $[0,T)$. Since it is also non-negative, it has a limit as $t$ goes to $T$. Therefore, $\eta$ has a finite limit as $t \to T$, completing the proof.
	\end{proof}
	
	\begin{proposition}
		The MV-SDE \eqref{eqMV-SDEgeneral} has a unique strong solution.
	\end{proposition}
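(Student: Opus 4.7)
The plan is to mirror the decoupling strategy used for \eqref{eqvariancegeneral} in Section~3: freeze the distributional feedback by replacing $\mathbb{E}[\varphi_j(\xi_t)]$ with the deterministic candidate $\phi_j(\eta(t))$, reduce to a linear SDE, and then verify self-consistency. Concretely, let $\eta:[0,T]\to\mathbb{R}_+$ be the solution to the ODE \eqref{eqODEgeneral} furnished by Proposition~\ref{propODEgeneral} (with initial value $\eta(0)=0$), and introduce the auxiliary linear SDE
\[
\mathrm{d}\hat{\xi}_t = -\mu(t,\phi_1(\eta(t)))\,\frac{\hat{\xi}_t}{T-t}\,\mathrm{d}t + \sigma(t,\phi_2(\eta(t)))\,\mathrm{d}W_t, \qquad \hat{\xi}_0=0.
\]
Under Assumption (i)--(iii) and continuity of $\eta$, the drift $t\mapsto \mu(t,\phi_1(\eta(t)))$ is continuous on $[0,T)$ and the diffusion $t\mapsto\sigma(t,\phi_2(\eta(t)))$ is continuous and dominated by $\sqrt{h(t)}$. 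A variant of Lemma~\ref{lmsdebeta} (adapted to a time-dependent diffusion coefficient, via a standard variation-of-constants argument) then produces a unique strong, centered Gaussian solution $\hat{\xi}$ with deterministic variance $m(t):=\mathbb{E}[\hat{\xi}_t^2]$.

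The heart of the argument is the self-consistency check. Applying It\^o's formula to $\hat{\xi}_t^2$, using that the stochastic integral is a true martingale (since $\sigma^2\leq h\in L^1([0,T])$ and $\hat{\xi}$ has locally uniformly bounded second moment), and taking expectations yields
\[
m'(t) = -\frac{2\mu(t,\phi_1(\eta(t)))}{T-t}\,m(t) + \sigma^2(t,\phi_2(\eta(t))),\qquad m(0)=0.
\]
Subtracting from \eqref{eqODEgeneral} gives the linear homogeneous ODE $(\eta-m)'(t)=-\tfrac{2\mu(t,\phi_1(\eta(t)))}{T-t}(\eta(t)-m(t))$ with zero initial value, so $m\equiv\eta$ on $[0,T)$. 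Because $\hat{\xi}_t$ is a centered Gaussian with variance $\eta(t)$, it shares its law with $W_{\eta(t)}$ for any standard Brownian motion $W$, and hence for $j=1,2$,
\[
\mathbb{E}[\varphi_j(\hat{\xi}_t)] \;=\; \mathbb{E}[\varphi_j(W_{\eta(t)})] \;=\; \phi_j(\eta(t)).
\]
Substituting these identities back into the auxiliary equation shows that it is literally the MV-SDE \eqref{eqMV-SDEgeneral}; hence existence of a strong solution.

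For uniqueness, I would run the argument in reverse: if $\xi$ is any strong solution to \eqref{eqMV-SDEgeneral}, then once the marginal laws $\mathcal{L}(\xi_t)$ are fixed, the coefficients become deterministic, so $\xi$ solves a linear SDE and is therefore a centered Gaussian with variance $\tilde m(t):=\mathbb{E}[\xi_t^2]$; consequently $\mathbb{E}[\varphi_j(\xi_t)]=\phi_j(\tilde m(t))$, and the same It\^o computation shows $\tilde m$ solves \eqref{eqODEgeneral}. The linear-SDE uniqueness furnished by Lemma~\ref{lmsdebeta} then forces $\xi=\hat{\xi}$ provided $\tilde m=\eta$. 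The main obstacle is therefore establishing uniqueness for the ODE \eqref{eqODEgeneral}, which Proposition~\ref{propODEgeneral} does not directly assert. I expect this to follow from a Gr\"onwall argument exploiting the intrinsic smoothness of the Gaussian mollifier $\phi_j(r)=\mathbb{E}[\varphi_j(W_r)]$, which is $C^\infty$ on $(0,\infty)$ under the integrability/growth conditions discussed after Assumption~(iii); this yields local Lipschitz regularity of the right-hand side of \eqref{eqODEgeneral} on any sub-interval of $[0,T)$ bounded away from $t=T$. Controlling the singular factor $1/(T-t)$ as $t\uparrow T$ — where the Lipschitz constant degenerates — is the most delicate point of the plan, and will likely require the additional a~priori bound $0\leq\eta\leq x+\int_0^T h$ (obtained as in Proposition~\ref{propODEgeneral}) to close the Gr\"onwall estimate uniformly on $[0,T)$.
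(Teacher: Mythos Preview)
Your existence argument coincides with the paper's: freeze the feedback through $\eta$, solve the resulting linear SDE via Lemma~\ref{lmsdebeta}, apply It\^o to the square and take expectations to identify $\mathbb{E}[\hat\xi_t^2]$ with $\eta(t)$, and then use that $\hat\xi_t$ is centered Gaussian to recover $\mathbb{E}[\varphi_j(\hat\xi_t)]=\phi_j(\eta(t))$, closing the self-consistency loop.

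On uniqueness you are in fact more careful than the paper. The paper simply asserts that once $\phi_j(\eta(t))=\mathbb{E}[\varphi_j(\bar\xi_t)]$ the two SDEs ``correspond to the same SDE'' and concludes uniqueness from Lemma~\ref{lmsdebeta}; it does not run the reverse argument showing that \emph{any} strong solution of \eqref{eqMV-SDEgeneral} must be centered Gaussian with variance solving \eqref{eqODEgeneral}, nor does it address uniqueness for the ODE~\eqref{eqODEgeneral} (Proposition~\ref{propODEgeneral} only gives existence). You correctly isolate ODE uniqueness as the genuine remaining issue and sketch a plausible local-Lipschitz/Gr\"onwall route via the smoothness of $r\mapsto\phi_j(r)$ on $(0,\infty)$; this goes beyond what the paper provides.
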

	\begin{proof}
		Let $\eta$ be a solution to the ODE \eqref{eqODEgeneral}. We consider the following SDE:
	\begin{equation}
        \left\{\begin{array}{l}
	\mathrm{d} \bar{\xi}_t=-\mu(t,\phi_1(\eta(t))) \dfrac{\bar{\xi}_t}{T-t} 
        \mathrm{d} t+\sigma(t,\phi_2(\eta(t))) \mathrm{d} W_t,\,t<T\\
	\bar{\xi}_0= 0
	\end{array}\right.\label{eqMV-SDEgeneralgeneral}
        \end{equation}
		It follows from Lemma \ref{lmsdebeta} that the SDE \eqref{eqMV-SDEgeneralgeneral} admits a unique strong solution. Moreover, the solution can be explicitly expressed in the following form: 
		\begin{equation}
			\bar{\xi}_t=G(t) \int_0^t \frac{\sigma(s,\phi_2(\eta(s)))}{G(s)} \mathrm{d} W_s, \quad t \in[0,T),\label{eq11}
		\end{equation}
		where
		\begin{equation}
			G(t):=\exp \left(-\int_0^t \dfrac{\mu(s,\phi_1(\eta(s)))}{T-s} \mathrm{d} s\right)
		\end{equation}
		is a function of class $C^1$ on $[0,T)$. Since the integrand in the stochastic integral in \eqref{eq11} is deterministic and locally bounded, $\bar{\xi}$ is a Gaussian process with mean zero and second moment given by
		\begin{equation}
			\mathbb{E}[\bar{\xi}_t^2] =G^2(t)\int_0^t \frac{\sigma^2(s,\phi_2(\eta(s)))}{G^2(s)} \mathrm{d} s, \quad 0 \leq  t<T.
		\end{equation}
		Hence, the function $t\mapsto \mathbb{E}[\bar{\xi}_t^2]$ is of class $C^1$ on $[0,T)$. Therefore, the process $$\left(\int_{0}^{t}\bar{\xi}_s\sigma(s,\phi_2(\eta(s)))\mathrm{d}W_s, 0\leq t<T\right)$$ is a true martingale. Applying Itô's formula, we obtain  
		\begin{equation}
			\bar{\xi}_t^2=-2\int_{0}^{t}\mu(s,\phi_1(\eta(s)))\dfrac{\bar{\xi}^2_s}{T-s}\mathrm{d}s+2\int_{0}^{t}\bar{\xi}_s\sigma(s,\phi_2(\eta(s)))\mathrm{d}W_s+\int_{0}^{t}\sigma^2(s,\phi_2(\eta(s)))\mathrm{d}s.\label{eqitoYmusigma}
		\end{equation}
		As a consequence, it follows from \eqref{eqitoYmusigma} that  
		\begin{equation}
			\mathrm{d}\mathbb{E}[\bar{\xi}_t^2]=-2\mu(t,\phi_1(\eta(t)))\dfrac{\mathbb{E}[\bar{\xi}^2_t]}{T-t}\mathrm{d}t+\sigma^2(t,\phi_2(\eta(t)))\mathrm{d}t.\label{eqEY_t^2'musigma}
		\end{equation}
		Using the fact that $f$ is the solution to the ODE \eqref{eqODEgeneral} and \eqref{eqEY_t^2'musigma}, we obtain  
		\begin{align*}
			\mathrm{d}(\eta(t)-\mathbb{E}[\bar{\xi}_t^2])&=-2\dfrac{\mu(t,\phi_1(\eta(t)))}{T-t}(\eta(t)-\mathbb{E}[\bar{\xi}_t^2])\mathrm{d}t
		\end{align*}  
		Hence,  
		\begin{equation*}
			\eta(t)=\mathbb{E}[\bar{\xi}_t^2],\,t<T.
		\end{equation*} 
		Consequently, for all $t<T$
		\begin{equation}
			\phi_j(\eta(t))=\phi\left(\mathbb{E}[\bar{\xi}_t^2]\right)=\displaystyle\int_{\mathbb{R}}\varphi_j(y)\dfrac{1}{\sqrt{2\pi \mathbb{E}[\bar{\xi}_t^2]}}\exp\left(-\dfrac{y^2}{2\mathbb{E}[\bar{\xi}_t^2]}\right)\mathrm{d}y,\,\,j=1,\,2.
		\end{equation}
		Since the solution to \eqref{eqMV-SDEgeneralgeneral} is a centred Gaussian process, we obtain
		\begin{equation*}
			\phi_j(\eta(t))=\mathbb{E}[\varphi_j(\bar{\xi}_t)],\,\,j=1,\,2.
		\end{equation*}
		Consequently, \eqref{eqMV-SDEgeneralgeneral} and \eqref{eqMV-SDEgeneral} corespond to the same SDE. Thus, the MV-SDE \eqref{eqMV-SDEgeneral} admits a unique strong solution.  
	\end{proof}
	To understand the asymptotic behavior of the process as it approaches the boundary of its domain, we now demonstrate that the second moment of $\xi_t$ converges to zero as $t \to T$.
	\begin{proposition} 
		Assume in addition that, for all $t\geq 0$, the function $x\longrightarrow \mu(t,x)$ is monotone. We have 
		\begin{equation}
			\lim\limits_{t \rightarrow T}\mathbb{E}[\xi^2_t]=0.\label{eqlimitgeneralEX_t^2}
		\end{equation}	
	\end{proposition}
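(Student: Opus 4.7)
The plan is to reduce the probabilistic statement to a deterministic one about the ODE and then follow the contradiction strategy used in the proof of Proposition \ref{propLimitoff}. During the existence-and-uniqueness proof for the MV-SDE \eqref{eqMV-SDEgeneral} one established the identity $\eta(t)=\mathbb{E}[\xi_t^2]$, with $\eta$ a solution to the ODE \eqref{eqODEgeneral}, and Proposition \ref{propODEgeneral} guarantees that $L:=\lim_{t\to T}\eta(t)$ exists in $[0,+\infty)$. Therefore it is enough to rule out $L>0$.

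Assuming for contradiction that $L>0$, I would first pick $\delta>0$ so small that $L/2\leq\eta(s)\leq 3L/2$ for every $s\in[T-\delta,T)$. Continuity of $\phi_1$ on the positive half-line (a consequence of Assumption (iii)) then ensures that $\{(s,\phi_1(\eta(s))):s\in[T-\delta,T]\}$ lies in a compact subset of $[0,T]\times\mathbb{R}$. Since $\mu$ is continuous and strictly positive on $[0,T]\times\mathbb{R}$ (Assumption (i)), it attains a positive minimum on this compact set; the additional monotonicity hypothesis on $x\mapsto\mu(t,x)$ makes this step transparent, by forcing the infimum of $\mu(s,\cdot)$ over the bounded interval $[\phi_1(L/2),\phi_1(3L/2)]$ to be realized at an endpoint. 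In either case one obtains a constant $m>0$ with
\begin{equation*}
\mu(s,\phi_1(\eta(s)))\geq m\quad\text{for every } s\in[T-\delta,T).
\end{equation*}

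Integrating the ODE \eqref{eqODEgeneral} from $T-\delta$ to $t\in(T-\delta,T)$ yields
\begin{equation*}
\eta(t)-\eta(T-\delta)=-2\int_{T-\delta}^{t}\frac{\mu(s,\phi_1(\eta(s)))\,\eta(s)}{T-s}\,\mathrm{d}s+\int_{T-\delta}^{t}\sigma^2(s,\phi_2(\eta(s)))\,\mathrm{d}s.
\end{equation*}
Combining the lower bounds $\mu\geq m$ and $\eta\geq L/2$, the drift integral dominates: it is at least $\tfrac{mL}{2}\int_{T-\delta}^{t}\frac{\mathrm{d}s}{T-s}=\tfrac{mL}{2}\bigl(\log\delta-\log(T-t)\bigr)$, so the whole drift contribution is bounded above by $-mL\bigl(\log\delta-\log(T-t)\bigr)\to -\infty$ as $t\to T$. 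The diffusion integral is uniformly dominated by $\int_0^T h(s)\,\mathrm{d}s<+\infty$ by Assumption (ii). Hence $\eta(t)\to -\infty$, contradicting the non-negativity of $\eta$. Thus $L=0$ and $\mathbb{E}[\xi_t^2]=\eta(t)\to 0$ as $t\to T$.

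The main obstacle is securing the uniform positive lower bound $m$ on $\mu(s,\phi_1(\eta(s)))$ in a left neighbourhood of $T$: without the boundedness away from zero of $\eta$ (which we obtain precisely from the contradictory hypothesis $L>0$), one could not confine $(s,\phi_1(\eta(s)))$ to a compact set on which continuity and positivity of $\mu$ deliver a uniform positive infimum. The monotonicity assumption is what makes this extraction clean, because it identifies the infimum of $\mu(s,\cdot)$ over $[\phi_1(L/2),\phi_1(3L/2)]$ with its value at one endpoint, after which continuity of $\mu$ in $t$ on the compact interval $[T-\delta,T]$ closes the argument.
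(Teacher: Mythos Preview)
Your proof is correct and follows the same contradiction strategy as the paper: assume $L>0$, trap $\eta$ in $[L/2,3L/2]$ on a left neighbourhood of $T$, integrate the ODE, and show that the drift integral diverges to $+\infty$ while the diffusion integral stays bounded by $\int_0^T h(s)\,\mathrm{d}s$. The one substantive difference is in how you secure the uniform positive lower bound on $\mu(s,\phi_1(\eta(s)))$. The paper uses the monotonicity of $x\mapsto\mu(t,x)$ explicitly: it first bounds $\phi_1(\eta(s))$ between two fixed numbers and then invokes monotonicity to sandwich $\mu(s,\phi_1(\eta(s)))$ between $\inf_{s\in[0,T]}\mu(s,\cdot)$ evaluated at those endpoints. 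You instead argue directly by compactness: since $\phi_1$ is continuous and $\eta$ is trapped in $[L/2,3L/2]$, the pair $(s,\phi_1(\eta(s)))$ ranges in a compact subset of $[0,T]\times\mathbb{R}$, on which the continuous strictly positive function $\mu$ attains a positive minimum $m$. Your route is cleaner and, as you correctly observe, shows that the monotonicity hypothesis is not actually needed for the conclusion once ``positive'' in Assumption~(i) is read as strictly positive; the paper's argument, by contrast, genuinely relies on monotonicity to pass from bounds on $\phi_1(\eta(s))$ to bounds on $\mu$.
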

	\begin{proof}
		We have shown in Proposition \ref{propODEgeneral} that $\lim\limits_{t \rightarrow T}\eta(t)$ exists. Assume, for contradiction, that $\lim\limits_{t \rightarrow T}\eta(t)=L>0$. Hence, there exists $\delta>0$ such that for all $T-\delta<t<T$, we have
		\begin{equation}
			\frac{L}{2}<\eta(t)<\frac{3L}{2}
		\end{equation}
		Thus, we have
		\begin{align}
			\eta(t)-\eta(T-\delta)&=\dint_{T-\delta}^{t}\eta'(s)\mathrm{d}s\nonumber\\
			&=-2\dint_{T-\delta}^{t}\mu(s,\phi_1(\eta(s)))\dfrac{\eta(s)}{T-s}\mathrm{d}s+\dint_{T-\delta}^{t}\sigma^2(s,\phi_2(\eta(s)))\mathrm{d}s,
		\end{align}
		where, we recall,
		$$ \phi_j(\eta(s))=\displaystyle\int_{\mathbb{R}}\varphi_j(y)\frac{1}{\sqrt{2\pi \eta(s)}}\exp\left(-\dfrac{y^2}{2 \eta(s)}\right)\mathrm{d}y,\,\,j=1,\,2.$$
		Hence, for all $T-\delta<s<t$, we have
		\begin{equation}
			\dfrac{1}{\sqrt{3}}\, \mathbb{E}[\varphi_j(B_{\frac{L}{2}})]	\leq \phi_j(\eta(s)) \leq \sqrt{3}\, \mathbb{E}[\varphi_j(B_{\frac{3L}{2}})],\,\,j=1,\,2.
		\end{equation}
		Thus,
		\begin{equation}
			\inf\limits_{s\in[0,T]}\mu\left(s,\dfrac{1}{\sqrt{3}}\, \mathbb{E}\left[\varphi_1(B_{\frac{L}{2}})\right]\right)	\leq \mu(s,\phi_1(\eta(s))) \leq \sup\limits_{s\in[0,T]}\mu\left(s,\sqrt{3}\, \mathbb{E}\left[\varphi_1\left(B_{\frac{3L}{2}}\right)\right]\right),
		\end{equation}
		Which implies that 
		\begin{small}
			\begin{multline*}
				-\dfrac{L}{2}	\inf\limits_{s\in[0,T]}\mu\left(s,\dfrac{1}{\sqrt{3}}\, \mathbb{E}\left[\varphi_1(B_{\frac{L}{2}})\right]\right)\log\left(\dfrac{T-t}{\delta}\right)	\leq \dint_{T-\delta}^{t}\mu(s,\phi_1(\eta(s)))\dfrac{\eta(s)}{T-s}\mathrm{d}s \\\leq- \dfrac{3L}{2} \sup\limits_{s\in[0,T]}\mu\left(s,\sqrt{3}\, \mathbb{E}\left[\varphi_1\left(B_{\frac{3L}{2}}\right)\right]\right)\log\left(\dfrac{T-t}{\delta}\right).
			\end{multline*}
		\end{small}
		Consequently, 
		\begin{equation*}
			\bar{H}(t)\leq \eta(t)-\eta(T-\delta)\leq\hat{H}(t)
		\end{equation*}
		where,
		\begin{equation}
			\bar{H}(t)=\dint_{T-\delta}^{t}\sigma^2(s,\phi_2(\eta(s)))\mathrm{d}s+3L \sup\limits_{s\in[0,T]}\mu\left(s,\sqrt{3}\, \mathbb{E}\left[\varphi_1\left(B_{\frac{3L}{2}}\right)\right]\right)\log\left(\dfrac{T-t}{\delta}\right)
		\end{equation}
		and
		\begin{equation*}
			\hat{H}(t)=\dint_{T-\delta}^{t}\sigma^2(s,\phi_2(\eta(s)))\mathrm{d}s+L\inf\limits_{s\in[0,T]}\mu\left(s,\dfrac{1}{\sqrt{3}}\, \mathbb{E}\left[\varphi_1(B_{\frac{L}{2}})\right]\right)\log\left(\dfrac{T-t}{\delta}\right)
		\end{equation*}
		This implies that $\lim\limits_{t \rightarrow T}\eta(t)=-\infty$. This contradicts the fact that  $\lim\limits_{t \rightarrow T}\eta(t)=L>0$. Thus, $\lim\limits_{t \rightarrow T}\mathbb{E}[\xi^2_t]=0$.
	\end{proof}
	\begin{proposition}
		The solution to the MV-SDE \eqref{eqMV-SDEgeneral} is a pinned process, i.e., 
		\begin{equation*}
			\mathbb{P}\left(\lim\limits_{t \rightarrow T} \xi_t=0\right)=1.
		\end{equation*}
    \end{proposition}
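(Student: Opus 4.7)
The plan is to follow the same template as the pinned-property proof for the nonlinear $\alpha>0$ case in Section~3: apply Lemma~\ref{lmpinnedproperty} to the linear SDE~\eqref{eqMV-SDEgeneralgeneral} (whose strong solution coincides with $\xi$), which reduces the claim to proving the divergence
\begin{equation*}
\lim_{t\to T}\int_0^t \frac{\mu(s,\phi_1(\eta(s)))}{T-s}\,\mathrm{d}s=+\infty,
\end{equation*}
where $\eta$ is the solution to the ODE~\eqref{eqODEgeneral}. Since the coefficients of \eqref{eqMV-SDEgeneralgeneral} are deterministic, continuous on $[0,T)$, and $\sigma^2(\cdot,\phi_2(\eta(\cdot)))$ is dominated by the integrable function $h$ from Assumption~(ii), Lemma~\ref{lmpinnedproperty} is directly applicable.

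To establish the divergence, first recall from Proposition~\ref{propODEgeneral} that $\eta(s)>0$ for every $s\in(0,T)$, so one may divide in~\eqref{eqODEgeneral} to rewrite, for $s\in(0,T)$,
\begin{equation*}
\frac{\mu(s,\phi_1(\eta(s)))}{T-s}=\frac{\sigma^2(s,\phi_2(\eta(s)))-\eta'(s)}{2\,\eta(s)}.
\end{equation*}
Fixing any $\varepsilon\in(0,T)$ and integrating on $[\varepsilon,t]$, the nonnegativity of $\sigma^2$ yields the key lower bound
\begin{equation*}
\int_\varepsilon^t\frac{\mu(s,\phi_1(\eta(s)))}{T-s}\,\mathrm{d}s
\;\geq\; -\frac{1}{2}\log\!\left(\frac{\eta(t)}{\eta(\varepsilon)}\right).
\end{equation*}
Under the additional monotonicity hypothesis on $x\mapsto\mu(t,x)$, the previous proposition gives $\eta(t)=\mathbb{E}[\xi_t^2]\to 0$ as $t\to T$, so the right-hand side tends to $+\infty$, completing the reduction.

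The main obstacle is verifying cleanly that Lemma~\ref{lmpinnedproperty} does apply to the SDE~\eqref{eqMV-SDEgeneralgeneral}, since its hypotheses in the Appendix are stated for the bridge-type linear SDE with deterministic time-dependent drift and diffusion; this requires only that $s\mapsto\mu(s,\phi_1(\eta(s)))$ be continuous and positive and that $s\mapsto\sigma(s,\phi_2(\eta(s)))$ be continuous and non-vanishing with square-integrable bound near $T$, all of which follow from Assumption~(i)--(iii) together with the continuity of $\eta$ and $\phi_j$. Once this is in place, the proof concludes immediately from the divergence of the integral and the nonnegativity of $\sigma^2$, with no further analytic work needed beyond what is already available in Section~3.
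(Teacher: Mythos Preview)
Your proposal is correct and follows essentially the same route as the paper's proof: both rewrite $\mu(s,\phi_1(\eta(s)))/(T-s)$ via the ODE~\eqref{eqODEgeneral}, drop the nonnegative $\sigma^2$-term to get the lower bound $-\tfrac12\log(\eta(t)/\eta(\varepsilon))$, use the previous proposition's conclusion $\eta(t)=\mathbb{E}[\xi_t^2]\to 0$ to deduce divergence, and then invoke \cite[Proposition~1]{HR} (equivalently Lemma~\ref{lmpinnedproperty}) together with the bound $\sigma^2\le h\in L^1(0,T)$ from Assumption~(ii). Your explicit check that the hypotheses of Lemma~\ref{lmpinnedproperty} are met, and your observation that the monotonicity assumption on $x\mapsto\mu(t,x)$ is implicitly needed here (since it is required for $\mathbb{E}[\xi_t^2]\to 0$), are both accurate and in fact make the dependence on the previous proposition more transparent than in the paper.
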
	
		\begin{proof}
			We need to show that $\mathbb{P}\left(\lim\limits_{t \rightarrow T} \xi_t=0\right)=1$. Let $\varepsilon>0$. For any $t\in (\varepsilon,T)$, we have
			\begin{align*}
				\displaystyle\int_0^t  \dfrac{\mu(s,\phi_1(\eta(s)))}{T-s} \mathrm{d}s&\geq \displaystyle\int_{\varepsilon}^t  \dfrac{\mu(s,\phi_1(\eta(s)))}{T-s} \mathrm{d}s\\
				&=\displaystyle\int_{\varepsilon}^t  \dfrac{\sigma^2(s,\phi_2(\eta(s)))-\eta'(s)}{2\eta(s)} \mathrm{d}s\\
				&\geq -\dfrac{1}{2}\displaystyle\int_{\varepsilon}^t  \dfrac{\eta'(s)}{\eta(s)} \mathrm{d}s\\
				&\geq -\dfrac{1}{2}\log\left(\dfrac{\mathbb{E}[\xi^2_t]}{\mathbb{E}[\xi^2_{\varepsilon}]}\right).
			\end{align*}
			Thus, 
			\begin{equation*}
				\lim_{t \rightarrow T} \displaystyle\int_0^t  \dfrac{\mu(s,\phi_1(f(s)))}{T-s} \mathrm{d}s= +\infty.
			\end{equation*}
			On the other hand, we have 
			\begin{equation}
				\lim\limits_{t \rightarrow T}\displaystyle\int_0^t \sigma^2(s,\mathbb{E}[\varphi_2(\xi_s)])\mathrm{d}s\leq \lim\limits_{t \rightarrow T}\displaystyle\int_0^t h(s)\mathrm{d}s<+\infty.
			\end{equation}
			The desired result follows from \cite[Proposition 1]{HR}.
		\end{proof}
		\begin{remark}
			The global positivity of the function $\mu$ is not essential. For example, if $\varphi_1$ and $\varphi_2$ are positive functions, it suffices to assume that $\mu$ is positive on $(0,T]\times (0,+\infty)$. This is because when $\varphi_1$ and $\varphi_2$ are positive, the expectations $\mathbb{E}[\varphi_1(W_t)]$ and $\mathbb{E}[\varphi_1(W_t)]$ remain positive for $t>0$.
		\end{remark}

	\appendix
	\section{Supporting Lemmas for Main Results}
	In this section, we present the lemmas used to prove the main results of this paper.
	\begin{lemma}\label{lmsdebeta}
		Let $A:[0,T)\longrightarrow \mathbb{R}$ and $B:[0,T)\longrightarrow \mathbb{R}$ be deterministic, measurable, and locally bounded
		functions. Then, the SDE
		\begin{equation}
			\left\{\begin{array}{l}
				\mathrm{d} V_t=-A(t)V_t \mathrm{d} t+B(t)\mathrm{d} W_t, \quad t \in[0,T) \\
				V_0=x
			\end{array}\right.\label{eqSDEHR}
		\end{equation}
		admits a unique strong solution. Moreover, the solution can be explicitly expressed in the following form: 
		\begin{equation}
			V_t=\exp\left(- \int_{0}^{t}  A(u) \, \mathrm{d}u\right)\left[x+\int_{0}^{t}\exp\left( \int_{0}^{s}  A(u) \, \mathrm{d}u\right) B(s)\,  \mathrm{d}W_s\right], \quad t \in [0, T).
		\end{equation}
	\end{lemma}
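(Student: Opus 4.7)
The plan is to apply the classical integrating-factor method for linear SDEs with deterministic coefficients. First I would introduce
$$\Phi(t) := \exp\Big(\int_0^t A(u)\,\mathrm{d}u\Big), \quad t \in [0,T),$$
which is strictly positive, absolutely continuous and of class $C^1$ on $[0,T)$ because $A$ is measurable and locally bounded (hence locally integrable). Since $s\mapsto \Phi(s) B(s)$ is then deterministic and locally bounded on $[0,T)$, it is locally square-integrable, so the Wiener integral $\int_0^t \Phi(s) B(s)\,\mathrm{d} W_s$ is well-defined as a continuous centred Gaussian martingale on $[0,T)$.

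For existence, I would take the candidate
$$V_t := \Phi(t)^{-1}\Big(x + \int_0^t \Phi(s) B(s)\,\mathrm{d} W_s\Big), \quad t\in[0,T),$$
and verify by It\^o's product formula that it solves \eqref{eqSDEHR}. Writing $Z_t := x + \int_0^t \Phi(s) B(s)\,\mathrm{d} W_s$, the process $\Phi^{-1}$ has finite variation with $\frac{d}{dt}\Phi(t)^{-1} = -A(t)\Phi(t)^{-1}$, so no cross-variation term appears and
$$\mathrm{d} V_t = -A(t)\Phi(t)^{-1} Z_t\,\mathrm{d} t + \Phi(t)^{-1}\Phi(t) B(t)\,\mathrm{d} W_t = -A(t) V_t\,\mathrm{d} t + B(t)\,\mathrm{d} W_t,$$
with $V_0 = x$, which is exactly \eqref{eqSDEHR} and simultaneously gives the explicit formula claimed in the lemma.

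For uniqueness, I would compare two strong solutions $V^{(1)}, V^{(2)}$ by applying It\^o's formula to $\Phi(t)\bigl(V^{(1)}_t - V^{(2)}_t\bigr)$; once again the covariation term vanishes (diffusion coefficients cancel in the difference and $\Phi$ has finite variation), leaving $d\bigl(\Phi(t)(V^{(1)}_t - V^{(2)}_t)\bigr) = 0$. Since the value at $t=0$ is zero and $\Phi > 0$, we conclude $V^{(1)} \equiv V^{(2)}$ on $[0,T)$.

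The argument is essentially routine and I do not expect a serious obstacle. The only point that needs mild care is that $A$ and $B$ are only assumed locally bounded on the open-ended interval $[0,T)$, so the construction and the It\^o computations should be carried out on compact subintervals $[0, T-\varepsilon]$ and then pieced together as $\varepsilon \downarrow 0$; this is fine because the explicit formula for $V_t$ depends only on $A, B$ restricted to $[0,t]$ and no integrability near $T$ is required for existence or uniqueness on $[0,T)$.
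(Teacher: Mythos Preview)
Your proposal is correct and is precisely the classical integrating-factor argument for linear SDEs. The paper does not give a proof at all but simply refers to Karatzas and Shreve \cite[Section~5.6]{KS}, which presents exactly this computation; so you have written out in full what the paper leaves as a citation.
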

	\begin{proof}
		see, e.g. Karatzas and Shreve \cite[Section 5.6]{KS}
	\end{proof}
	\begin{lemma}\label{ODE_variance}
		The ODE \eqref{eqf't2} possesses the explicit unique solution of the form
		
		\begin{equation}
			v(t) = \dfrac{\sqrt{T-t}}{\sqrt{2}}\,\,\dfrac{ I_1(2 \sqrt{2} \sqrt{T}) K_1(2 \sqrt{2} \sqrt{T - t})
				- K_1(2 \sqrt{2} \sqrt{T}) I_1(2 \sqrt{2} \sqrt{T - t})}{ I_1(2 \sqrt{2} \sqrt{T}) K_0(2 \sqrt{2} \sqrt{T - t}) + K_1(2 \sqrt{2} \sqrt{T}) I_0(2 \sqrt{2} \sqrt{T - t})}
			\label{sol_ODE_variance}
		\end{equation}
	\end{lemma}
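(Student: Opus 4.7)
The plan is to linearize the Riccati-type ODE \eqref{eqf't2} by a logarithmic-derivative substitution, reduce the resulting linear second-order equation to a modified Bessel equation of order zero through a change of independent variable, and then fix the two constants of integration from the initial condition.

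First, I would introduce a new unknown $g:[0,T)\to\mathbb{R}$ via
\begin{equation*}
v(t)=\dfrac{T-t}{2}\,\dfrac{g'(t)}{g(t)}, \qquad t\in[0,T),
\end{equation*}
which is the natural ansatz for equations of the form $v'=-2v^{2}/(T-t)+1$ since $v^{2}/(T-t)$ then carries the troublesome $(g'/g)^{2}$ term. Substituting into \eqref{eqf't2} and clearing denominators, the quadratic terms cancel exactly, and a short computation yields the linear second-order ODE
\begin{equation*}
(T-t)\,g''(t)-g'(t)-2\,g(t)=0, \qquad t\in[0,T).
\end{equation*}
The initial condition $v(0)=0$ translates into $g'(0)=0$ (with the normalization $g(0)\neq 0$ being a free choice since only the ratio $g'/g$ matters).

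Next, I would perform the change of variable $z=2\sqrt{2}\sqrt{T-t}$, that is $T-t=z^{2}/8$, and set $G(z):=g(t)$. The chain rule transforms the derivatives as $g'(t)=-(\sqrt{2}/\sqrt{T-t})\,G'(z)\cdot(\text{sign factors})$ and similarly for $g''$, and after collecting terms the equation reduces to
\begin{equation*}
G''(z)+\dfrac{1}{z}G'(z)-G(z)=0,
\end{equation*}
which is the modified Bessel equation of order zero. Its general solution is $G(z)=A\,I_{0}(z)+B\,K_{0}(z)$, hence
\begin{equation*}
g(t)=A\,I_{0}\!\left(2\sqrt{2}\sqrt{T-t}\right)+B\,K_{0}\!\left(2\sqrt{2}\sqrt{T-t}\right).
\end{equation*}
Using $I_{0}'=I_{1}$ and $K_{0}'=-K_{1}$, the condition $g'(0)=0$ forces $A\,I_{1}(2\sqrt{2}\sqrt{T})=B\,K_{1}(2\sqrt{2}\sqrt{T})$, so up to overall scale I may take $A=K_{1}(2\sqrt{2}\sqrt{T})$ and $B=I_{1}(2\sqrt{2}\sqrt{T})$, recovering precisely the function $g$ defined in Section~3. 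Inserting this $g$ back into $v=\tfrac{T-t}{2}g'/g$ and once more applying $I_{0}'=I_{1}$, $K_{0}'=-K_{1}$ yields exactly \eqref{sol_ODE_variance}.

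For uniqueness, the right-hand side of \eqref{eqf't2} is locally Lipschitz in $v$ on $[0,T)\times\mathbb{R}$, so any two solutions agreeing at $t=0$ coincide on their common interval of existence; the bound $0\leq f(t)\leq x+T$ from Proposition~\ref{propODEalpha} (applied with $\alpha=1$, $x=0$) extends the unique local solution globally to $[0,T)$, and its continuity up to $T$ follows from the explicit Bessel-function representation just derived. The main subtlety I anticipate is bookkeeping in the change of variable $z=2\sqrt{2}\sqrt{T-t}$ — keeping track of the chain-rule factors so that the coefficient of $g''$ reduces cleanly to $1$ and the zeroth-order coefficient becomes $-1$, rather than some rescaled Bessel parameter — but beyond this routine calculation the argument is straightforward.
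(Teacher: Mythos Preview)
Your proposal is correct and follows essentially the same route as the paper: the paper first sets $v(t)=\tfrac{T-t}{2}\,r(t)$ and then $r=u'/u$, whereas you compose these into the single substitution $v=\tfrac{T-t}{2}\,g'/g$ (so your $g$ is the paper's $u$), arriving at the same linear equation $(T-t)g''-g'-2g=0$ and the same reduction to the modified Bessel equation of order zero via $z=2\sqrt{2}\sqrt{T-t}$. The only additions are your explicit uniqueness remark via local Lipschitz continuity and the appeal to Proposition~\ref{propODEalpha} for global existence, which the paper's proof does not spell out.
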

	\begin{proof}
		To achieve this, we set:
		\begin{equation}
			v(t)=\dfrac{T-t}{2}r(t), t<T.
			\label{eqTransformation}
		\end{equation}
		Substituting \eqref{eqTransformation} into \eqref{eqf't2}, we obtain the
		following ODE:
		\begin{equation*}
			r'(t)+r^2(t)-\dfrac{1}{T-t}r(t)=\dfrac{2}{T-t},\,\,r(0)=0,
		\end{equation*}
		which takes the form of a general Riccati equation. Inspired by
		\cite[page1133]{GR}, we apply the following transformation:
		\begin{equation*}
			r(t)=\dfrac{u'(t)}{u(t)},\,\,t<T.
		\end{equation*}
		Thus, the function $u$ satisfies the ODE:
		\begin{equation}
			u''(t)-\dfrac{1}{T-t}u'(t)-\dfrac{2}{T-t}u(t)=0,\,\,u'(0)=0.
			\label{eqODEBessel}
		\end{equation}
		By setting $s=2 \sqrt{2} \sqrt{T - t}$, we obtain:
		\begin{equation*}
			\dfrac{\mathrm{d}u}{\mathrm{d}t}=-\dfrac{4}{s}\frac{\mathrm{d}u}{\mathrm{d}s}
		\end{equation*}
		and
		\begin{equation*}
			\dfrac{\mathrm{d}^2u}{\mathrm{d}t^2}=\dfrac{16}{s^2}\dfrac{\mathrm{d}^2u}{\mathrm{d}s^2}
			-\dfrac{16}{s^3}\frac{\mathrm{d}u}{\mathrm{d}s}
		\end{equation*}
		Therefore, \eqref{eqODEBessel} can be rewritten as:
		\begin{equation}
			\dfrac{\mathrm{d}^2u}{\mathrm{d}s^2}+\dfrac{1}{s}\dfrac{\mathrm{d}u}{\mathrm{d}s}-u=0.
			\label{eqODEbessel1}
		\end{equation}
		Using \cite[page 972]{GR}, we conclude that the solution to the ODE
		\eqref{eqODEbessel1} is given by
		\begin{equation*}
			C_1\,K_0(s)+C_2\,I_0(s)
		\end{equation*}
		Transforming back, the solution to the ODE \eqref{eqODEBessel} is given by
		\begin{equation*}
			C_1\, K_0(2 \sqrt{2} \sqrt{T - t}) + C_2\, I_0(2 \sqrt{2} \sqrt{T - t}).
		\end{equation*}
		The initial condition leads to
		\begin{equation*}
			C_1=I_1(2 \sqrt{2} \sqrt{T}) \text{ and } C_2=K_1(2 \sqrt{2} \sqrt{T}).
		\end{equation*}
		Therefore, the solution to the ODE \eqref{eqf't2} is of the form
		\eqref{sol_ODE_variance} given in the proposition of this lemma. This concludes the proof.
	\end{proof}
	\begin{lemma}\label{lmpinnedproperty}
		We consider the following SDE:
		\begin{equation}
			\left\{\begin{array}{l}
				\mathrm{d} R_t=h(t)\left(y-R_t\right) \mathrm{d} t+\mathfrak{s}(t) \mathrm{d} W_t, \quad t \in[0,T) \\
				R_0=x
			\end{array}\right.\label{eqSDEHR}
		\end{equation}
		for any $x, y \in \mathbb{R}$, where $h$ and $\mathfrak{s}$ are two continuous functions defined on $[0, T)$ and that $\mathfrak{s}$ is positive, and $W$ is a standard Brownian motion. If the following holds:
		\begin{enumerate}
			\item[(i)] The function $h$ is bounded from below, and $\lim\limits_{t \rightarrow T} \displaystyle\int_0^t h(r) \, \mathrm{d}r = +\infty$
			\item[(ii)] $\lim\limits_{t \rightarrow T} \displaystyle\int_0^t \mathfrak{s}^2(r) \, \mathrm{d}r < +\infty$
		\end{enumerate}
		then, the solution $R$ to the ODE \eqref{eqSDEHR} possesses the pinned property, that is,
		\begin{equation}
			\mathbb{P}(\lim\limits_{t \rightarrow T} R_t=y)=1.
		\end{equation}
	\end{lemma}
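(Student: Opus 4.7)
The plan is to exploit the linearity of \eqref{eqSDEHR} to obtain an explicit Gaussian representation for $R_t$ and then analyze the asymptotics of each piece separately. Setting $H(t) := \int_0^t h(r)\,\mathrm{d}r$, which is well-defined and continuous and which tends to $+\infty$ as $t \to T$ by condition (i), the integrating-factor method (i.e.\ Lemma \ref{lmsdebeta} applied to the linear SDE satisfied by $R_t - y$) gives
\begin{equation*}
R_t = y + e^{-H(t)}(x - y) + e^{-H(t)} M_t, \qquad t \in [0,T),
\end{equation*}
where $M_t := \int_0^t e^{H(s)}\mathfrak{s}(s)\,\mathrm{d}W_s$ is a continuous Gaussian martingale with quadratic variation $\langle M \rangle_t = \int_0^t e^{2H(s)}\mathfrak{s}^2(s)\,\mathrm{d}s$. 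The deterministic term $e^{-H(t)}(x-y)$ tends to $0$ immediately, so the task reduces to showing $e^{-H(t)} M_t \to 0$ almost surely.

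First I would establish the variance decay $V(t) := \text{Var}(e^{-H(t)} M_t) = e^{-2H(t)}\langle M\rangle_t \to 0$. Using a lower bound $h \geq -C$ (so $H(t) - H(s) \geq -C(t-s) \geq -CT$ for all $s \leq t$), one splits the integral at an intermediate $t_0$: the part over $[0, t_0]$ is bounded by $e^{2CT}\,e^{-2\int_{t_0}^{t} h(r)\,\mathrm{d}r}\int_0^{t_0}\mathfrak{s}^2(s)\,\mathrm{d}s$, which vanishes as $t \to T$ by (i); the part over $[t_0, t]$ is bounded by $e^{2C(T-t_0)}\int_{t_0}^{T}\mathfrak{s}^2(s)\,\mathrm{d}s$, which is made arbitrarily small by choosing $t_0$ close to $T$, thanks to (ii). If additionally $\langle M\rangle_T < +\infty$, then $M$ is $L^2$-bounded, converges a.s.\ to a finite limit, and the multiplication by $e^{-H(t)} \to 0$ closes the argument.

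In the remaining case $\langle M\rangle_T = +\infty$, I would apply the Dubins--Schwarz theorem to write $M_t = \tilde W_{\langle M\rangle_t}$ for some standard Brownian motion $\tilde W$, and invoke the law of the iterated logarithm to obtain, a.s.\ for $t$ near $T$,
\begin{equation*}
|e^{-H(t)} M_t| \leq 2 \sqrt{V(t)\,\log\log \langle M\rangle_t}.
\end{equation*}
The a priori upper bound $\langle M\rangle_t \leq e^{2CT}\, e^{2H(t)}\int_0^T \mathfrak{s}^2(s)\,\mathrm{d}s$ ensures $\log\log\langle M\rangle_t = O(\log H(t))$, and the preceding decay of $V(t)$ is expected to dominate this double-logarithmic factor, yielding the desired a.s.\ limit. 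The main obstacle lies precisely here: upgrading the $L^2$-convergence of the stochastic piece to an a.s.\ statement when $\langle M\rangle_T = +\infty$ requires a careful balancing between the LIL growth of $\tilde W$ and the decay rate of $V(t)$, and this is where the standing assumptions (lower bound on $h$ and $\int_0^T \mathfrak{s}^2(s)\,\mathrm{d}s < +\infty$) interact. As an alternative path, once the Gaussian formula above is in hand, the conclusion can also be read off from the related pinning result \cite[Proposition 1]{HR}.
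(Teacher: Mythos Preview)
The paper's own ``proof'' of this lemma consists of the single sentence ``See \cite[Proposition~1]{HR}'', with no argument supplied. Your proposal ends at exactly the same place (invoking \cite[Proposition~1]{HR} once the integrating-factor representation is in hand), so in that sense you match the paper. Everything you add beyond that---the explicit formula for $R_t$, the variance-splitting argument showing $V(t)\to 0$, and the clean treatment of the case $\langle M\rangle_T<\infty$---is extra content not present in the paper.

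Your self-diagnosed gap in the case $\langle M\rangle_T=+\infty$ is real, not merely a technicality. The LIL gives $|e^{-H(t)}M_t|\lesssim \sqrt{V(t)\log\log\langle M\rangle_t}$, and you correctly bound $\log\log\langle M\rangle_t=O(\log H(t))$; but the splitting estimate you used for $V(t)$ only yields, for each fixed $t_0$, a bound of the form $V(t)\le A(t_0)e^{-2H(t)}+\epsilon(t_0)$ with $\epsilon(t_0)\to 0$ as $t_0\to T$. Multiplying by $\log H(t)$, the first piece still vanishes but the second piece $\epsilon(t_0)\log H(t)$ blows up for any fixed $t_0$, so the argument as written does not close. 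A sharper, $t$-dependent choice of the splitting point (or a different martingale technique altogether) would be needed to finish this branch directly. Since you fall back on \cite{HR} for precisely this reason, your overall proposal is sound and in fact strictly more informative than the paper's one-line citation.
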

	\begin{proof}
		See	\cite[Proposition 1]{HR}
	\end{proof}
\end{document}